\newcommand{\nat}{\mathbb{N}}			% natural numbers
\newcommand{\real}{\mathbb{R}}			% real numbers
\newcommand{\vdim}{\mathbf{n}}			% fixed varifold dimension
\newcommand{\avdim}{n}				% general varifold dimension
\newcommand{\codim}{\mathbf{k}}			% fixed co-dimension
\newcommand{\acodim}{k}				% general co-dimension
\newcommand{\fulldim}{\mathbf{n}+\mathbf{k}}	% fixed full dimension
\newcommand{\afulldim}{n+k}			% general full dimension
\newcommand{\Var}{\mathbf{V}}			% general varifolds
\newcommand{\RVar}{\mathbf{RV}}			% rectifiable varifolds
\newcommand{\IVar}{\mathbf{IV}}			% integral varifolds
\newcommand{\appTan}{\mathbf{T}}		% approximate tangent space
\newcommand{\Lp}[1]{\mathbf{L}^{#1}}
\newcommand{\indRad}[1]{\|#1 \|}		% induced Radon measure
\newcommand{\fvar}[1]{\partial #1}		% first variation of a varifold
\newcommand{\tvar}[1]{\|\fvar{#1}\|}		% first variation of a varifold
\newcommand{\genTan}{\mathbf{T}}		% generalized tangent space symbol
\newcommand{\genTanx}[2]{\mathbf{T}(#1,#2)}	% generalized tangent space of #1 at #2
\newcommand{\genMCV}{\mathbf{H}}		% generalized mean curvature vector symbol
\newcommand{\cspace}{\mathcal{C}}				% continuous
\newcommand{\ccspace}{\mathcal{C}_{\mathrm{c}}}			% continuous w. cpt. spt.
\newcommand{\dspace}[1]{\mathcal{C}^{#1}}			% differentiable
\newcommand{\cdspace}[1]{\mathcal{C}_{\mathrm{c}}^{#1}}		% differentiable w. cpt. spt.
\newcommand{\LM}[1]{\mathscr{L}^{#1}}		% Lebesque measure
\newcommand{\HM}[1]{\mathscr{H}^{#1}}		% Hausdorff measure
\newcommand{\grass}{\mathbf{G}}
\newcommand{\grassn}[1]{\mathbf{G}_{#1}}
\newcommand{\projT}[1]{#1_{\natural}}
\newcommand{\density}{\boldsymbol{\Theta}}		% the density function for measures
\newcommand{\Kronecker}{\boldsymbol{\delta}}		%Kronecker delta
\newcommand{\oball}[2]{\mathbf{U}(#1,#2)}
\newcommand{\noball}[3]{\mathbf{U}^{#1}(#2,#3)}
\newcommand{\unitmeasure}[1]{\boldsymbol{\omega}(#1)}
\newcommand{\ud}{\ensuremath{\,\mathrm{d}}}
\newcommand{\mres}{\mathop{\vrule height 1.3ex depth 0pt width
0.12ex\vrule height 0.11ex depth 0pt width 1.1ex}}
\newcommand{\Bdry}[1]{\partial #1}
\newcommand{\tder}{\partial_t}							% time derivative
\newcommand{\utder}{\overline{\partial}_t}		% upper time derivative
\newcommand{\lefttder}{\partial^-_t}		% upper time derivative from left
\newcommand{\GenVar}{\mathscr{V}}		% general Variation
\newcommand{\BVar}{\mathscr{B}}			% Brakke variation
\newcommand{\TBVar}{\mathscr{T}}		% Tonegawas Brakke variation
\newcommand{\force}{u}				% force in the Brakke variation
\newcommand{\aforce}{f}				% alternative force
\newcommand{\gensubVar}{\mathcal{U}}		% some subset of general varifolds
\newcommand{\subVar}{\mathcal{V}}		% a particular subset of general varifolds
\newcommand{\badtimes}{J}			% set of bad times
\DeclareMathOperator{\spt}{spt}     % support
\theoremstyle{plain}
\newtheorem{thm}{Theorem}[section]
\newtheorem{lem}[thm]{Lemma}
\newtheorem{prop}[thm]{Proposition}
\newtheorem{cor}[thm]{Corollary}
\theoremstyle{definition}
\newtheorem{defn}[thm]{Definition}
\newtheorem{rem}[thm]{Remark}
\newtheorem{sett}[thm]{Setting}
\newtheorem*{note}{Note}
\numberwithin{equation}{section}
\begin{document}

\title{Equality of the usual definitions of Brakke flow}

\author{Ananda Lahiri\footnote{
Max Planck Institute for Gravitational Physics 
(Albert Einstein Institute)}}

\maketitle

%
%abstract
%
\begin{abstract}
In 1978 Brakke introduced the mean curvature flow in the setting of geometric measure theory.
There exist multiple variants of the original definition.
Here we prove that most of them are indeed equal.
One central point is to correct the proof of Brakke's \S 3.5,
where he develops an estimate for the evolution of the measure of time-dependent test functions.
\end{abstract}

\tableofcontents

\section{Introduction}
\label{introduction}
\paragraph{Overview.}
\begin{comment}
Consider a smooth $\vdim$-dimensional submanifold $M\subset\real^{\fulldim}$
with mean curvature vector $\genMCV$.
Let $h\in\dspace{\infty}((\epsilon,\epsilon),\dspace{\infty}(\real^{\fulldim},\real^{\fulldim}))$
be a pertubation such that
\begin{align*}
h_0(x)=x\quad\text{for all } x\in\real^{\fulldim},
\\
\{x\in\real^{\fulldim}: h_t(x)\neq x\text{ for some }t\in (-\epsilon,\epsilon)\}\subset\subset\real^{\fulldim},
\\
\tder h_t|_{t=0}(x)=\genMCV(x)\quad\text{for all } x\in M.
\end{align*}
Consider $\phi\in\cdspace{2}(\real^{\fulldim})$.
The first Variation in $\genMCV$ direction with respect to $\phi$
is given by (see \cite[\S 4.9.(1)]{MR0307015})
\begin{align*}
\partial_{\genMCV}M(\phi):=\frac{d}{dt} \int_{(h_t)_{\sharp}M}\phi\ud \HM{\vdim}\Big|_{t=0}
=\int_M\left( D\phi\cdot\genMCV - \phi|\genMCV|^2\right)\ud \HM{\vdim} 
\end{align*}
Note that this is independent of the particular choice of $h$.
A smooth family $(M_t)_{t\in (t_1,t_2)}$ of $\vdim$-dimensional submanifolds
is called a mean curvature flow if
\begin{align}
\label{smoothMCF}
\frac{d}{dt}\HM{\vdim}\mres M_t(\phi)=\partial_{\genMCV}M_t(\phi)
\end{align}
for all $\phi\in\cdspace{2}(\real^{\fulldim})$ at all $t\in (t_1,t_2)$.
\end{comment}

In 1978 Brakke introduced the mean curvature flow in his pioneering work \cite{MR485012}.
There he uses the setting of geometric measure theory to formulate the evolution equation.
Starting with Huisken's work %in 1984 
(see \cite{MR772132}) the smooth mean curvature flow came more into focus.
Around 1991 the weak mean curvature flow was reformulated as a level set problem 
by Chen, Giga, and Goto \cite{MR1100211} as well as by Evans and Spruck \cite{MR1100206}.
Later, in 2010 Liu, Sato, and Tonegawa introduced mean curvature flow with an additional translation term \cite{MR2652019}.
Some analysis on different weak formulations,
can be found in a work by Ilmanen \cite{MR1196160}
which is mainly about elliptic regularization,
but is also a great introduction to weak mean curvature flow.
In particular Ilmanen compares Brakke's definition with the level-set formulation.

Here we are interested in weak mean curvature flow in Brakke's original sense with or without additional translation term. 
Such flows will be called Brakke flows below.
Looking at the literature one finds varying definitions of Brakke flow
(see \hyperlink{Brakke_flow}{Definition of Brakke flow} (B1)- (B5)).
Basically these definitions differ in the time-dependency of the test function and/or
in that the inequality is pointwise or integral in time.

The \hyperlink{main_result}{Main Result} of this article is that these definitions are actually all equivalent
if the translation term is continuous, in particular if it is absent.
One problem is to obtain the pointwise inequality from the integral one.
This primarily follows from the upper-semi-continuity of the Brakke variation
shown by Ilmanen \cite[\S 7]{MR1196160}.

The main problem addressed here is to go from time-independent to time-dependent test-functions.
Brakke's work already contains a corresponding result \cite[\S 3.5]{MR485012}
but the proof he gives includes a major error (see Remark \ref{Brakke_error}). 
Mostly we re-arrange his calculations from \cite[ch.\ 3]{MR485012} to fix this.

\begin{comment}
The only original result is Proposition \ref{almostcontinuity},
which sais that if $\spt\psi\subset\spt\chi$ and
$t\to V_t(\chi)$ is continuous
then also $t\to V_t(\psi)$ is continuous.
\end{comment}

\paragraph{General Assumptions.}
\hypertarget{general_assumptions}{}
\label{general_assumptions}
We consider the following situation:
\begin{itemize}
\item
$\vdim,\codim\in\nat$, 
$U\subset\real^{\fulldim}$ open, $t_1\in\real$, $t_2\in (t_1,\infty)$\\
and $(V_t)_{t\in [t_1,t_2]}$ a family in $\Var_{\vdim}(U)$.
\item
$\subVar\subset\Var_{\vdim}(U)$
such that $V_t\in\subVar$ for all $t\in [t_1,t_2]\setminus\badtimes_0$, $\LM{1}(\badtimes_0)=0$.
\item
$\BVar :\Var_{\vdim}(U)\times\cdspace{2}(U)\times(U\to\real^{\fulldim})\to [-\infty,\infty)$
Brakke variation, 
\\see Definition \ref{Variation_defs} and Definition \ref{Tonegawas_variation}.
\item
$\force:[t_1,t_2]\to(U\to\real^{\fulldim}).$ 
\end{itemize}
\begin{note}
The most interesting case is $\subVar=\IVar_{\vdim}(U)$.
\end{note}
\begin{note}
We tried to keep Definition \ref{Variation_defs} general 
to cover all Brakke variations used in the literature.
Note that for $V\in\IVar_{\vdim}(U)$ 
all established Brakke variations coinside with $\TBVar$ from Definition \ref{Tonegawas_variation}.
\end{note}

%
%Brakke flow
%
\paragraph{Definition Of Brakke Flow.}
\hypertarget{Brakke_flow}{}
%Consider the General Assumptions \ref{generalassumptions}.
The Family $(V_t)_{t\in[t_1,t_2]}$
is called a \emph{Brakke flow} if one of the following holds:
\renewcommand{\labelenumi}{(B\arabic{enumi})}
\begin{enumerate}
\item
\label{originalBrakkeflow}
For all $s\in [t_1,t_2]$ and $\phi\in\cdspace{2}(U,\real^+)$
we have
\begin{align*}
\utder\indRad{V_t}(\phi)|_{t=s}
\leq\BVar(V_s,\phi,\force_s). 
\end{align*}
\item
\label{strongBrakkeflow}
For all $s\in [t_1,t_2]$ and $\phi\in\dspace{1}([t_1,t_2],\cdspace{2}(U,\real^+))$
we have
\begin{align*}
\utder(\indRad{V_t}(\phi_t))|_{t=s}
\leq\BVar(V_s,\phi_s,\force_s)+\indRad{V_s}(\tder\phi_t|_{t=s}). 
\end{align*}
\item
\label{Tonegawaflow_variant}
For all $s_1,s_2\in[t_1,t_2]$ with $s_1<s_2$
and all $\phi\in\dspace{1}([s_1,s_2],\cdspace{2}(U,\real^+))$
%$g\in\Lp{1}([s_1,s_2])$, $L\in\real$ 
such that 
%$\BVar(V_s,\phi_s,\force_s)+\indRad{V_s}(\tder\phi_t|_{t=s})\leq g(s)\leq L$
%with $s_1<s_2$ and all $\phi\in\dspace{1}(\real,\cdspace{2}(U,\real^+))$
$\sup_{s\in [s_1,s_2]}\BVar(V_s,\phi_s,\force_s)+\indRad{V_s}(\tder\phi_t|_{t=s})<\infty$ holds
we have that $s\to\BVar(V_s,\phi_s,\force_s)+\indRad{V_s}(\tder\phi_t|_{t=s})$ is in $\Lp{1}([s_1,s_2])$ and
\begin{align*}
\indRad{V_{s_2}}(\phi_{s_2})-\indRad{V_{s_1}}(\phi_{s_1})
\leq\int_{s_1}^{s_2}\Big(\BVar(V_s,\phi_s,\force_s)+\indRad{V_s}(\tder\phi_t|_{t=s})\Big)\ud s.
%\leq\int_{s_1}^{s_2}g(s)\ud s.
\end{align*}
%(Note that the integral is only assumed to exist in $[-\infty,\infty]$.)
\item
\label{Tonegawaflow}
For all $s_1,s_2\in[t_1,t_2]$ with $s_1<s_2$ and all $\phi\in\dspace{1}([s_1,s_2],\cdspace{2}(U,\real^+))$
we have that $s\to\BVar(V_s,\phi_s,\force_s)+\indRad{V_s}(\tder\phi_t|_{t=s})$ is in $\Lp{1}([s_1,s_2])$ and
\begin{align*}
\indRad{V_{s_2}}(\phi_{s_2})-\indRad{V_{s_1}}(\phi_{s_1})
\leq\int_{s_1}^{s_2}\Big(\BVar(V_s,\phi_s,\force_s)+\indRad{V_s}(\tder\phi_t|_{t=s})\Big)\ud s.
\end{align*}
\item
\label{weakBrakkeflow}
For all $s_1,s_2\in[t_1,t_2]$ with $s_1<s_2$ and all $\phi\in\cdspace{2}(U,\real^+)$
we have that $s\to\BVar(V_s,\phi,\force_s)$ is in $\Lp{1}([s_1,s_2])$ and
\begin{align*}
\indRad{V_{s_2}}(\phi)-\indRad{V_{s_1}}(\phi)
\leq\int_{s_1}^{s_2}\BVar(V_s,\phi,\force_s)\ud s.
\end{align*}
\end{enumerate}
\begin{note}
(B\ref{strongBrakkeflow}) implies (B\ref{originalBrakkeflow}).
(B\ref{Tonegawaflow}) implies (B\ref{Tonegawaflow_variant}).
(B\ref{Tonegawaflow}) implies (B\ref{weakBrakkeflow}).
\end{note}

%
%main result
%
\paragraph{Main Result.}
\hypertarget{main_result}{}
\emph{
Suppose $\BVar$ is upper-continuous (see Definition \ref{Variation_defs})
and $\force$ is in $\dspace{0}([t_1,t_2],\dspace{0}(U,\real^{\fulldim}))$.
Then the characterisations (B1)-(B5) are equivalent.
}
\begin{note}
If moreover $\subVar=\IVar_{\vdim}(U)$
the flow is independent on the definition of $\BVar$ for  $V\in\Var_{\vdim}(U)\setminus\IVar_{\vdim}(U)$
(see Corollary \ref{equal_variations}).
\end{note}

\paragraph{Organisation and sketch of the proof}
We will end Section \ref{introduction} with some comments on the notation
and recalling some definitions.

In Section \ref{variation} we introduce the Brakke variation and give a proper example,
see Definition \ref{Tonegawas_variation}.
Moreover we reproduce Ilmanens proof of upper-semi-continuity from \cite[\S 7]{MR1196160}.

The point of Section \ref{barriers} is to obtain uniform bounds on the measure of compact sets,
see Lemma \ref{uniformmeasurebound}.
This is easy for definitions (B\ref{Tonegawaflow}) and (B\ref{weakBrakkeflow}),
but in the other cases we have to exploit Brakke's barrier function, see Definition \ref{barrier_function}.
For this particular function we can fix the statement of \cite[\S 3.5]{MR485012},
then following Brakke's sphere comparison \cite[\S 3.6,\,3.7]{MR485012} yields the desired uniform bounds.
This already establishes (B\ref{Tonegawaflow_variant}) implies (B\ref{Tonegawaflow}).

In Section \ref{continuity} we first note that Lemma \ref{uniformmeasurebound}
directly implies that $t\to V_t(\phi)$ cannot 'jump up'.
This is then used to show Proposition \ref{almostcontinuity},
which sais that 
if $\BVar(V_{s}, \chi, \force_s)>-\infty$ then $t\to V_t(\chi)$ is continuous in $t=s$.
Using this Proposition we can fix \cite[\S 3.5]{MR485012} in the general case,
which yields (B\ref{originalBrakkeflow}) implies (B\ref{strongBrakkeflow}).
Combining Proposition \ref{almostcontinuity} with the upper-continuity of the Brakke variation
also establishes (B\ref{weakBrakkeflow}) implies (B\ref{originalBrakkeflow})
as well as (B\ref{strongBrakkeflow}) implies (B\ref{Tonegawaflow_variant}). 
This completes the proof of the \hyperlink{main_result}{Main Result}.

In the Appendix \ref{appendix} we show growth bounds for functions with bounded upper derivatives
and an $\Lp{2}$-approximation Lemma.

\paragraph{Acknowledgements.}
I want to thank Ulrich Menne for his help and advice.
I also want to thank Felix Schulze for pointing out the error in Brakke's proof of
\cite[\S 3.5]{MR485012} to me.

%
%
%

%
%Notation
%
\paragraph{Notation.}
For an excellent introduction to geometric measure theory we recommend the lecture notes by Simon \cite{MR756417}.
Here we state the most important definitions.
\begin{itemize}
%
%sets
%
\item 
We set $\real^{+}:=\{x\in \mathbb{R},x\geq 0\}$
and $\nat:=\{1,2,3,\ldots\}$.

%
%canonical projections
%
\item
For $a\in\mathbb{R}^{\vdim+\codim}$
the values $\hat{a}\in\mathbb{R}^{\vdim}$
and $\tilde{a}\in\mathbb{R}^{\codim}$ 
are given by $a=(\hat{a},\tilde{a})$.

\item
Consider an interval $I\subset\real$,
a Banach space $B$ and an $f:I\to B$.
We often identify $f_t=f(t)$ for $t\in I$.
We set $\tder f_t|_{t=s}:=\lim_{t\to s}(f_{t}-f_s)/(t-s)$
if this exists. % in $[-\infty,\infty]$.
In case $B=\real$ we set for $s\in I$
\begin{align*}
\utder f_t|_{t=s}:=\limsup_{t\to s}(f_{t}-f_s)/(t-s),
\quad
\lefttder f_t|_{t=s}:=\limsup_{t\nearrow s}(f_{t}-f_s)/(t-s),
\end{align*}
which are allowed to be in $[-\infty,\infty]$.
Here we also assume $s>\inf I$ in the definition of $\lefttder f_t|_{t=s}$.
%For $[s_1,s_2]\subset\real$, $d\in\nat$ we set \\
%$\dspace{d}([s_1,s_2],B):=\{\phi|_{[s_1,s_2]}:\;\phi\in\dspace{d}((s_1-\epsilon,s_2+\epsilon),B)\;\text{for some }\epsilon>0\}$ 

\begin{comment}
%
%canonical basis
%
\item
We denote the canonical basis of $\mathbb{R}^{\mathbf{n}+\mathbf{k}}$ by $(\mathbf{e}_i)_{1\leq i\leq\mathbf{n}+\mathbf{k}}$.
In particular the canonical basis of $\mathbb{R}^{\mathbf{n}}$ is $(\hat{\mathbf{e}}_i)_{1\leq i\leq\mathbf{n}}$.
\end{comment}

\end{itemize}
Consider $\avdim,\acodim\in\mathbb{N}$
and an open set $\Omega\subset\real^{\afulldim}$.
\begin{itemize}
%
%projections
%
\item
%Let $\mathbf{O}(n)$ denote the space of rotations on $\mathbb{R}^{n}$.
Let $\grass(n+k,\avdim)$ denote the space of $\avdim$-dimensional subspaces of $\real^{\afulldim}$.
For $T\in\grass(\afulldim,\avdim)$
set $T^{\bot}:=\{x\in\real^{\afulldim}:\;x\cdot v=0\quad\forall v\in T\}$.
By $\projT{T}:\real^{\afulldim}\to T$ we denote the projection onto $T$.
We may identify $\projT{T}$ with the $\afulldim\times\afulldim$-matrix $M$ that satisfies
$Mx=\projT{T}(x)$ for $x\in\real^{\afulldim}$.

%
%algebra
%
\item
Consider $\avdim\times\avdim$-matrices $A=(a_{ij})_{1\leq i,j\leq\avdim}$ and $B=(b_{ij})_{1\leq i,j\leq\avdim}$.
We define $A\cdot B:=\mathrm{trace}(A^TB)=\sum_{i,j=1}^{\avdim}a_{ij}b_{ij}$.

%
%Balls and cylinders
%
\item
%For $R,r,h\in (0,\infty)$
For $x_0\in\real^{\avdim}$, $y_0\in\real^{\fulldim}$ 
and $R\in (0,\infty)$ 
we set
\begin{align*}
\noball{\avdim}{x_0}{R}:=\left\{x\in\real^{\avdim}:|x-x_0|<R\right\},
\;\;\;\oball{y_0}{R}:=\noball{\fulldim}{y_0}{R},
%\\
%\ocylh{a}{r}{h}:=\noball{\vdim}{\hat{a}}{r}\times\noball{\codim}{\tilde{a}}{h},
%\;\;\;
%\ocyl{a}{r}:=\noball{\vdim}{\hat{a}}{r}\times\real^{\codim}.
\end{align*}

\begin{comment}
%
%functionspaces
%
\item
For $l\in\mathbb{N}$, $\alpha\in (0,1]$ 
we let $\mathcal{C}^{l,\alpha}(\Omega,\mathbf{R}^k)$ 
be the space of all functions $f:\Omega\to\mathbb{R}^k$ 
such that the first $l$ derivatives exist and are
H\"older continuous with coefficient $\alpha$,
or just continuous if $\alpha$ is left out.
If $k=1$ we will leave out the $\mathbf{R}^k$.
If we only consider functions with compact support in $\Omega$
we write $\mathcal{C}^{l,\alpha}_c$.

%
%functions
%
\item
Let $I\subset\mathbb{R}$ open and $f\in\mathcal{C}^{1}(I\times\Omega)$.
We denote by $\partial_t f$ the derivative of $f$ in $I$,
while $Df$ denotes the derivative of $f$ in~$\Omega$.
If $(\mu_t)_{t\in I}$ is a family of Radon measures on $\Omega$ we often abbreviate
$\int_\Omega f(t,x)\;\ud\mu_t(x)=\int_\Omega f\;\ud\mu_t$.
\end{comment}

%
%measures
%
\item
Let $\LM{\avdim}$ denote the $\avdim$-dimensional Lebesque measure
and $\HM{\avdim}$ denote the $\avdim$-dimensional Hausdorff measure.
Set $\unitmeasure{\avdim}:=\LM{\avdim}(\noball{\avdim}{0}{1})$.

%
%Grassmanian, Varifolds
%
\item
Set $\grassn{\avdim}(\Omega):=\Omega\times\grass(\afulldim,\avdim)$.
By $\Var_{\avdim}(\Omega)$ we deonte the space of all Radon measures on $\grassn{\avdim}$.
The elements of $\Var_{\avdim}(\Omega)$ are called \emph{(general) varifolds}.
\end{itemize}
%
%
%radon measures
%
%
Consider a Radon measure $\mu$ on $\Omega$.
\begin{itemize}

%
%support
%
\item
Set $\spt\mu:=\{x\in U:\;\mu(\noball{n+k}{x}{r})>0,\;\text{for all}\;r\in (0,\infty)\}$.

%
%density
%
\item
\begin{comment}
Consider $x\in\Omega$.
We define the \emph{upper} and \emph{lower density} by
\begin{align*}
\density^{*\avdim}(\mu,x):=\limsup_{r\searrow 0}\frac{\mu(\mathbf{B}^{n+k}(x,r))}{\unitmeasure{\avdim} r^{n}},
\;\;\;
\density^{\avdim}_*(\mu,x):=\liminf_{r\searrow 0}\frac{\mu(\mathbf{B}^{n+k}(x,r))}{\unitmeasure{\avdim} r^{n}}
\end{align*}
and if both coincide 
the value is denoted by $\density^{\avdim}(\mu,x)$ and called the \emph{density} of $\mu$ at $x$.
\end{comment}
For $x\in\Omega$ we set $\density^{\avdim}(\mu,x):=\lim_{r\searrow 0}\frac{\mu(\mathbf{B}^{n+k}(x,r))}{\unitmeasure{\avdim} r^{n}}$
supposed this limit exists. 
$\density^{\avdim}(\mu,x)$ is called the \emph{density} of $\mu$ at $x$.

\begin{comment}
%
%tangent space
%
\item
Consider $y\in\Omega$.
If there exist $\theta(y)\in (0,\infty)$ and $\genTanx{\mu}{y}\in\grass(n+k,n)$
such that
\begin{align*}
\lim_{\lambda\searrow 0}\lambda^{-n}\int_{\Omega}\phi(\lambda^{-1}(x-y))\,\ud\mu(x)
=\theta(y)\int_{\genTanx{\mu}{y}}\phi(x)\,\ud\mathscr{H}^n(x)
\end{align*}
for all $\phi\in\ccspace(\real^{n+k})$,
then $\genTanx{\mu}{y}$ is called the \emph{($n$-dimensional) approximate tangent space} of $\mu$ at $y$
with multiplicity $\theta(y)$.

%
%rectifiable measure
%
\item
We say $\mu$ is \emph{$n$-rectifiable},
if the approximate tangent space exists at $\mu$-a.e.\ point $x\in\Omega$.
At these points we have $\theta(x)=\density^{n}(\mu,x)$.
We say $\mu$ is \emph{integer $n$-rectifiable},
if $\mu$ is $n$-rectifiable and $\density^{n}(\mu,x)\in\mathbb{N}$ for $\mu$-a.e. $x\in\Omega$.
%We say $\mu$ has unit density,
%if $\mu$ is $n$-rectifiable and $\Theta^{n}(\mu,x)=1$ for $\mu$-a.e.\ $x\in\Omega$.
\end{comment}

\end{itemize}
%
%
%varifolds
%
%
Consider a varifold $V\in\Var_{\avdim}(\Omega)$ 
\begin{itemize}
%
%induced Radon measure
%
\item
$V$ induces a Radon measure on $\Omega$ denoted by $\indRad{V}$, which is defined via
$\indRad{V}(A):= V(\{(x,S)\in\grassn{\avdim}(\Omega),\; x\in A\})$
for any $A\subset\Omega$.

%
%varifold tangent space
%
\item
Consider $y\in\Omega$.
If there exist $\theta(y)\in (0,\infty)$ and $T\in\grass(n+k,n)$
such that
\begin{align*}
\lim_{\lambda\searrow 0}\lambda^{-n}\int_{\grass_n(\Omega)}\phi(\lambda^{-1}(x-y))\,\ud V(x,S)
=\theta(y)\int_{T}\phi(x,T)\,\ud\mathscr{H}^n(x)
\end{align*}
for all $\phi\in\ccspace(\grass_n(\Omega))$,
then we set $\genTanx{V}{y}:=T$ and call this the \emph{($n$-dimensional) approximate tangent space} of $V$ at $y$
with multiplicity $\theta(y)$.

%
%rectifiable measure
%
\item
We say $V$ is \emph{$n$-rectifiable},
if the approximate tangent space exists at $\indRad{V}$-a.e.\ point $x\in\Omega$.
At these points we have $\theta(x)=\density^{n}(\indRad{V},x)$.
The set of all $n$-rectifiable varifolds is denoted by $\RVar_{n}(\Omega)$.

\item
We say $V$ is \emph{integer $n$-rectifiable},
if $V$ is $n$-rectifiable and $\density^{n}(\indRad{V},x)\in\mathbb{N}$ for $\indRad{V}$-a.e. $x\in\Omega$.
The set of all integer $n$-rectifiable varifolds is denoted by $\IVar_{n}(\Omega)$.

%We will abriviate rectiviable for $\vdim$-rectifiable.
\begin{comment}
%
%first variation
%
\item
We denote by $\vartn{V}$ the first variation of $V$ which is linear functional on $\cdspace{1}(\Omega,\real^{\afulldim})$ given by
$\vartn{V}(\Psi):=\int_{\grassn{\avdim}(\Omega)}D\Psi(x)\cdot S_{\natural}\ud V(x,S)$.
%for all $\Psi\in\ccspace(\Omega,\real^{\afulldim})$.
\end{comment}
%
%gen mean curv
%
\item
Suppose there exists an $\genMCV\in\Lp{1}_{\mathrm{loc}}((\Omega,\indRad{V}),\real^{\fulldim})$ such that
$$\int_{\grassn{\avdim}(\Omega)}D\Psi(x)\cdot S_{\natural}\ud V(x,S):=\int_{\Omega}\Psi(x)\cdot \genMCV\ud\indRad{V}(x)$$
for all $\Psi\in\cdspace{1}(\real^{\afulldim},\real^{\afulldim})$ with $\{\Psi>0\}\subset\Omega$.
Then $\genMCV$ is called the \emph{(generalised) mean curvature vector} of $V$ in $\Omega$.
\end{itemize}

\begin{comment}
%
%
%rectifiable measures
%
%
Let $\mu$ be an $n$-rectifiable Radon measure on $U$
\begin{itemize}
%
%divergence
%
\item
Consider $\phi\in\mathcal{C}^1(U,\mathbb{R}^{n+k})$.
For $x\in U$ such that $\mathbf{T}(\mu,x)$ exists
set $\mathrm{div}_{\mu}\phi(x):=\sum_{i=1}^n (D_{b_i}\phi(x))\cdot b_i$,
where $(b_i)_{1\leq i\leq n}$ is an orthonormal basis of $\mathbf{T}(\mu,x)$.
%
%first variation
%
\item
Denote the first variation of $\mu$ in $U$ by
$\delta\mu(\phi):=\int_U\mathrm{div}_{\mu}\phi\;\mathrm{d}\mu$
for $\phi\in\mathcal{C}_{\mathrm{c}}^1(U,\mathbb{R}^{n+k})$.
Set $\|\delta\mu\|(A):=\sup\{\partial\mu(\phi), \phi\in\mathcal{C}_{\mathrm{c}}^1(A,\mathbb{R}^{n+k}), |\phi|\leq 1\}$
for $A\subset U$ open.
%
%mean curvature
%
\item
If there exists $\mathbf{H}_{\mu}:\mathrm{spt}\mu\to\mathbb{R}^{n+k}$ such that
$\mathbf{H}_{\mu}$ is locally $\mu$-integrable and
$\delta\mu(\phi)=\int_U\mathbf{H}_{\mu}\cdot\phi\;\mathrm{d}\mu$
for all $\phi\in\mathcal{C}_{\mathrm{c}}^1(U,\mathbb{R}^{n+k})$,
then $\mathbf{H}_{\mu}$ is called the generalised mean curvature vector of $\mu$ in $U$.
\end{itemize}
\end{comment}

\section{Brakke Variation}
\label{variation}
For a good introduction to Brakke flow we recommend the work of Ilmanen \cite{MR1196160}.
Here we give a very general definition of Brakke variation and then present an example.
For a varifold $V$ with  mean curvature vector $\genMCV$
the first variation in $\mathrm{v}$ direction with respect to $\phi$
is given by
\begin{align*}
%\label{first_variation}
\partial_{\mathrm{v}}V(\phi):=\int(\projT{S}^{\bot}D\phi(x)-\phi(x)\genMCV(x))\mathrm{v}(x)\ud V(x,S)
\end{align*}
(see \cite[\S 4.9.(1)]{MR0307015}).
Basically one wants to set $\BVar(V,\phi,\aforce):=\partial_{(\genMCV+\aforce^{\bot})}V(\phi)$
whenever this is defined and $\BVar(V,\phi,\aforce):=-\infty$ otherwise.
However it is quiet common to set $\BVar(V,\phi,\aforce):=-\infty$ already 
if $V$ is unrectifiable or even if it is just not integer rectifiable,
as we do in Definition \ref{Tonegawas_variation}.
The general definition of Brakke variation below covers all this variants.
In the second part of this section we prove that the particular Brakke variation 
from Definition \ref{Tonegawas_variation} is upper-continuous
which is taken from \cite[\S 7]{MR1196160}.

\begin{defn}
\label{Variation_defs}
Consider $\GenVar:\Var_{\vdim}(U)\times\cdspace{1}(U)\times(U\to\real^{\fulldim})\to [-\infty,\infty)$.
\renewcommand{\labelenumi}{(\arabic{enumi})}
\begin{enumerate}
\item
\label{Brakke_variation}
$\GenVar$ is called a \emph{Brakke variation} if 
for all $V\in\Var_{\vdim}(U)$, $\phi\in\cdspace{2} (U,\real^+)$,
and $\aforce:U\to\real^{\fulldim}$ the following holds:\\
If $V$ has a generalised mean curvature vector $\genMCV$ in $\{\phi>0\}$
and moreover $\aforce,\genMCV\in\Lp{2}((U,\indRad{V}\mres\{\phi>0\}),\real^{\fulldim})$
then
\begin{align*}
\GenVar(V,\phi,\aforce)
\leq\int_{\grassn{\vdim}(U)}\big(\projT{S^{\bot}}D\phi(x)-\phi(x)\genMCV(x)\big)\big(\genMCV(x)+\projT{S^{\bot}}\aforce(x)\big)\ud V(x,S).
\end{align*}
Otherwise $\GenVar(V,\phi,\aforce)=-\infty$.

\item
\label{proper_continuity}
$\GenVar$ is called \emph{upper-contionuous on $\gensubVar\subset\Var_{\vdim}(U)$} if the following properties hold:
\renewcommand{\labelenumii}{(2\alph{enumii})}
\begin{enumerate}
\item
\label{upper_continuity}
Let $\chi\in\cdspace{2}(U,\real^+)$, $M\in\real^+$, $V_0\in\Var_{\vdim}(U)$, 
$\aforce_0\in\dspace{0}(U,\real^{\fulldim})$.
For $i\in\nat$ consider $V_i\in\gensubVar$ and 
$\aforce_i\in\dspace{0}(U,\real^{\fulldim})$
such that 
\begin{align}
\label{uniform_measure_bound}
&\sup_{i\in\nat\cup\{0\}}\indRad{V_i}(\{\chi>0\})\leq M,
\\
\label{measure_convergence}
&\lim_{i\to\infty}\indRad{V_i}(\psi)=\indRad{V_0}(\psi),
\quad
\text{for all } \psi\in\cdspace{0}(\{\chi>0\}),
\\
\label{force_convergence}
&\lim_{i\to\infty}\|\aforce_i-\aforce_0\|_{\dspace{0}(\{\chi>0\},\real^{\fulldim})}=0.
\end{align}
Then $\limsup_{n\to\infty}\GenVar(V_i,\chi,\aforce_i)\leq\GenVar(V_0,\chi,\aforce_0)$.
\item
\label{strong_continuity}
Let $\chi\in\cdspace{2} (U,[0,1])$, $V\in\gensubVar$,
$\aforce,\genMCV\in\Lp{2}((U,\indRad{V}\mres\{\chi>0\}),\real^{\fulldim})$, 
and suppose $\genMCV$ is 
the generalised mean curvature vector of $V$ in $\{\chi>0\}$.
For $i\in\nat\cup\{0\}$ consider $\psi_i\in\cdspace{2}(\{\chi=1\},\real^+)$
such that 
$$\lim_{i\to\infty}\|\psi_i-\psi_0\|_{\dspace{2}(U)}= 0.$$
Then $\lim_{i\to\infty}\GenVar(V,\psi_i,\aforce)=\GenVar(V,\psi_0,\aforce)$.
\end{enumerate}
\end{enumerate}
\end{defn}
%\noindent 
%Note that by \hyperlink{general_assumptions}{General Assumptions} 
%$\BVar$ will always denote a Brakke variation. 

%
%Brakke variation bound
%
\begin{lem}[{\cite[\S 3.4,\,\S 3.6]{MR485012}}]
\label{Brakkevariationbound}
Consider $V\in\Var_{\vdim}(U)$, $\phi\in\cdspace{2}(U,\real^+)$ and $\aforce:U\to\real^{\fulldim}$
with $\BVar(V,\phi,\aforce)>-\infty$.
Then the following two estimates hold:
\begin{align*}
\BVar(V,\phi,\aforce)
\leq &
-\frac{1}{4}\int_U\phi|\genMCV|^2\ud\indRad{V}
+2\int_{\{\phi>0\}}(|D\phi|^2/\phi+\phi|\aforce|^2)\ud\indRad{V}.
\\
\BVar(V,\phi,\aforce)
\leq &
\int_{\grassn{\vdim}(\{\phi>0\})}\bigg(\frac{|\projT{S}D\phi(x)|^2}{2\phi(x)}-D^2\phi(x)\cdot \projT{S}\bigg)\ud V(x,S)\\
&+\int_{U}(|D\phi||\aforce|+\phi|\aforce|^2)\ud\indRad{V}.
\end{align*}
\end{lem}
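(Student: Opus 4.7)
The starting point is the upper bound from Definition~\ref{Variation_defs}: since $\BVar(V,\phi,\aforce) > -\infty$, the varifold $V$ has a generalised mean curvature vector $\genMCV$ in $\{\phi > 0\}$ with $\genMCV, \aforce \in \Lp{2}((U, \indRad{V} \mres \{\phi > 0\}), \real^{\fulldim})$, and
\begin{align*}
\BVar(V,\phi,\aforce) \leq \int_{\grassn{\vdim}(U)} \bigl(\projT{S^\bot} D\phi - \phi\genMCV\bigr) \cdot \bigl(\genMCV + \projT{S^\bot}\aforce\bigr) \ud V(x,S).
\end{align*}
Expanding and using that $\projT{S^\bot}$ is a self-adjoint projection, the integrand equals $\projT{S^\bot}D\phi \cdot \genMCV + D\phi \cdot \projT{S^\bot}\aforce - \phi|\genMCV|^2 - \phi\genMCV \cdot \projT{S^\bot}\aforce$. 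Since $\phi \in \cdspace{2}(U,\real^+)$, every zero of $\phi$ is a local minimum, so $D\phi = 0$ on $\{\phi = 0\}$; consequently $|D\phi|^2/\phi$ is interpreted as $0$ there and all nontrivial contributions live inside $\{\phi > 0\}$.

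For the first estimate I would simply apply Young's inequality three times in order to split each cross-term between $\phi|\genMCV|^2$ and $|D\phi|^2/\phi + \phi|\aforce|^2$:
\begin{align*}
\projT{S^\bot}D\phi \cdot \genMCV &\leq \tfrac{1}{2}\phi|\genMCV|^2 + \tfrac{|D\phi|^2}{2\phi}, \qquad -\phi\genMCV \cdot \projT{S^\bot}\aforce \leq \tfrac{1}{4}\phi|\genMCV|^2 + \phi|\aforce|^2, \\
D\phi \cdot \projT{S^\bot}\aforce &\leq \tfrac{|D\phi|^2}{2\phi} + \tfrac{1}{2}\phi|\aforce|^2.
\end{align*}
Added to $-\phi|\genMCV|^2$, these leave total coefficient $-\tfrac{1}{4}$ on $\phi|\genMCV|^2$ and a remainder bounded above by $2\bigl(|D\phi|^2/\phi + \phi|\aforce|^2\bigr)$; integrating against $V$ yields the first inequality.

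For the second estimate the strategy is to use the first variation identity to remove $\genMCV$ from the term $\int \projT{S^\bot}D\phi \cdot \genMCV \ud V$. Inserting the vector test field $\Psi := D\phi$ expresses $\int D\phi \cdot \genMCV \ud \indRad{V}$ as $-\int D^2\phi \cdot \projT{S} \ud V$ (with the sign determined by the paper's first variation convention), and the decomposition $\projT{S^\bot}D\phi = D\phi - \projT{S}D\phi$ then rewrites the cross term in terms of this Hessian integral and $\int \projT{S}D\phi \cdot \genMCV \ud V$. Gathering the three surviving $\genMCV$-containing contributions into the single integrand $-\phi|\genMCV|^2 - \genMCV \cdot (\projT{S}D\phi + \phi \projT{S^\bot}\aforce)$ and completing the square in $\genMCV$ pointwise bounds it above by $|\projT{S}D\phi + \phi\projT{S^\bot}\aforce|^2/(4\phi)$. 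The critical algebraic observation is the pointwise orthogonality $\projT{S}D\phi \perp \projT{S^\bot}\aforce$, under which this squared norm splits cleanly into $|\projT{S}D\phi|^2 + \phi^2|\projT{S^\bot}\aforce|^2$. Relaxing $\tfrac14$ to $\tfrac12$ on the first summand, bounding $|\projT{S^\bot}\aforce| \leq |\aforce|$, and estimating $|D\phi \cdot \projT{S^\bot}\aforce| \leq |D\phi||\aforce|$ then completes the second estimate.

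The main obstacle I anticipate is justifying the integration by parts, because the first variation identity requires a compactly supported $\dspace{1}$ test field with closed support inside $\{\phi > 0\}$, whereas $\spt D\phi$ can in general touch $\{\phi = 0\}$. The standard remedy is to apply the first variation to $\Psi_\epsilon := \eta_\epsilon(\phi) D\phi$ for a smooth $\eta_\epsilon : [0,\infty) \to [0,1]$ that vanishes on $[0,\epsilon/2]$ and equals $1$ on $[\epsilon,\infty)$, and then let $\epsilon \searrow 0$. The additional product-rule term $\eta_\epsilon'(\phi)(D\phi \otimes D\phi) \cdot \projT{S}$ is supported in $\{\epsilon/2 < \phi < \epsilon\}$ with pointwise size at most $(C/\epsilon)|D\phi|^2$; the elementary inequality $|D\phi|^2 \leq 2\|D^2\phi\|_\infty \phi$, valid for any nonnegative $\dspace{2}$ function by a one-line Taylor argument, renders this uniformly bounded on a set whose $\indRad{V}$-measure tends to $0$, so the correction vanishes in the limit, while the remaining terms converge by dominated convergence using $\genMCV \in \Lp{2}$.
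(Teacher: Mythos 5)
Your proof is correct and follows the same broad strategy as the paper's: expand the bound from Definition~\ref{Variation_defs}(1), apply Young's inequality for the first estimate, and complete a square plus integrate by parts for the second. For the first estimate your three Young splits are a harmless rearrangement of the paper's two, landing on the same coefficients $-\tfrac14$ and $2$.

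For the second estimate you take a mildly different and somewhat cleaner route. The paper's displayed identity completes the square only in the pair $\projT{S}D\phi$ and $\sqrt{\phi}\genMCV$ and then would have to dispose of the $-\phi\genMCV\cdot\projT{S^\bot}\aforce$ term by a separate Young step (with a leftover $+\tfrac14\phi|\genMCV|^2$ absorbed by the residual $-\phi|\genMCV|^2$), whereas you fold $\phi\projT{S^\bot}\aforce$ into the square from the start and then exploit the pointwise orthogonality $\projT{S}D\phi\perp\projT{S^\bot}\aforce$ to split the resulting squared norm; the two arguments are interchangeable in effort, but yours makes the bookkeeping of the $\genMCV$-coefficients more transparent and actually yields the stronger constant $\tfrac14$ before relaxing. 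You also make explicit a point the paper leaves to the reader: the first-variation identity for $\Psi=D\phi$ needs an $\eta_\epsilon(\phi)$ cutoff because $D\phi$ need not be compactly supported in $\{\phi>0\}$, and the error term $\eta_\epsilon'(\phi)(D\phi\otimes D\phi)\cdot\projT{S}$ is controlled precisely by the inequality $|D\phi|^2\le 2\phi\sup|D^2\phi|$ that the paper records as Remark~\ref{smooth_testfct_est}; this is a genuine gap in the paper's two-line sketch that you close correctly. One thing you gloss over: you write $\int D\phi\cdot\genMCV\ud\indRad{V}=-\int D^2\phi\cdot\projT{S}\ud V$ and attribute the sign to ``the paper's first variation convention,'' but the paper's own displayed definition of the generalised mean curvature vector reads with the opposite sign; the sign you use is the one that actually makes the claimed estimate provable, so you are implicitly (and correctly) fixing what is most likely a sign typo in the paper's notation section, and it would have been worth flagging this rather than asserting agreement.
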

%
%C^2 function remark
%
\begin{rem}[{Zheng, \cite[Lem.\ 6.6]{MR1196160}}]
\label{smooth_testfct_est}
We have $|D\phi(x)|^2\leq 2\phi(x)\sup_U|D^2\phi|$
for all $x\in U$
whenever $\phi\in\cdspace{2}(U,\real^+)$.
\end{rem}
%
%rect var remark
%
\begin{rem}[{\cite[Thm.\ 8.1.3]{MR756417}}]
\label{rect_var_equality}
For a recitifiable varifold $V\in\RVar_{\vdim}(U)$
we have
$\int_{\grass_{\vdim}(U)}\phi(x,S)\ud V(x,S)=\int_{U}\phi(x,\genTan(V,x))\ud \indRad{V}(x)$
for every $V$-integrable function $\phi$ on $\grass_{\vdim}(U)$.
\end{rem}
%
%proof
%
\begin{proof}[Proof of Lemma \ref{Brakkevariationbound}]
For the first estimate note that on $\{\phi>0\}$ we have
by Young's inequality
\begin{align*}
\projT{S^{\bot}}D\phi\cdot\genMCV
\leq |D\phi|^2/\phi+\phi|\genMCV|^2/4,
\quad
|D\phi||\aforce|
\leq |D\phi|^2/\phi+\phi|\aforce|^2.
\end{align*}
For the second estimate calculate that on $\{\phi>0\}$ we have
\begin{align*}
-\phi|\genMCV|^2-2\projT{S^{\bot}}D\phi\cdot\genMCV
&=-\phi|\genMCV|^2-2D\phi\cdot\genMCV+2\projT{S}D\phi\cdot\genMCV\\
&=-2D\phi\cdot\genMCV-\big|\projT{S}D\phi/\sqrt{\phi}-\sqrt{\phi}\genMCV\big|^2
+|\projT{S}D\phi|^2/\phi .
\end{align*}
Then use Definition \ref{Variation_defs}(\ref{Brakke_variation})
and the characterisation of the mean curvature vector to establish the result.
\end{proof}

%
%Brakke variation
%
\begin{defn}
\label{Tonegawas_variation}
Define $\TBVar :\Var_{\vdim}(U)\times\cdspace{1}(U)\times(U\to\real^{\fulldim})\to [-\infty,\infty)$ by:
If $\phi\in\cdspace{1}(U)$, $V\mres\{\phi>0\}\in\IVar_{\vdim}(U)$,
$\aforce,\genMCV\in\Lp{2}(\indRad{V};\{\phi>0\},\real^{\fulldim})$
and $\genMCV$ is the generalised mean curvature vector of $V$ in $\{\phi>0\}$ then
\begin{align*}
\TBVar(V,\phi,\aforce)
:=\int_{U}\big(D\phi(x)-\phi(x)\genMCV(x)\big)\big(\genMCV(x)+\projT{\genTan(V,x)^{\bot}}\aforce(x)\big)\ud\indRad{V}(x).
\end{align*}
%where $(\cdot)^{\bot}=\projT{\genTan(\indRad{V},x)^{\bot}}(\cdot)$.
Otherwise $\TBVar(V,\phi,\aforce):=-\infty$.
%Here $\appTan(x)$ is the approximate tangent space of $\indRad{V}$ at $x$.
\end{defn}
\begin{rem}
\label{curvature_perp}
Consider an open subset $\Omega\subset\real^{\fulldim}$ and a varifold $V\in\IVar_{\vdim}(U)$ with mean curvature vector
$\genMCV$ in $\Omega$.
Then a deep theorem of Brakke \cite[Thm.\ 5.8]{MR485012} states
that $\genMCV(x)\perp\genTan(V,x)$ for $\indRad{V}$-a.e.\ $x\in U$.
%In particular in the formula above we could replace $D\phi(x)$ by $\projT{\genTan(\indRad{V},x)^{\bot}}D\phi(x)$.
\end{rem}

%
%
%proper Brakke variation
%
%
\begin{prop}[{\cite[\S 7]{MR1196160}}]
\label{proper_Brakke_variation}
The $\TBVar$ from Definition \ref{Tonegawas_variation} is a Brakke variation
and is upper-continuous on $\Var_{\vdim}(U)$. 
\end{prop}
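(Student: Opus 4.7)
\textbf{Brakke variation property.} The plan for Definition~\ref{Variation_defs}(\ref{Brakke_variation}) is to reduce to the case where $\TBVar$ is finite: if $V$ possesses a generalised mean curvature vector $\genMCV$ in $\{\phi>0\}$ with $\aforce,\genMCV\in\Lp{2}((U,\indRad{V}\mres\{\phi>0\}),\real^{\fulldim})$ but $V\mres\{\phi>0\}\notin\IVar_{\vdim}(U)$, then by definition $\TBVar(V,\phi,\aforce)=-\infty$ and the required bound is trivial. Otherwise, Remark~\ref{curvature_perp} gives $\genMCV(x)\perp\genTan(V,x)$ for $\indRad{V}$-a.e.\ $x$, which turns $D\phi(x)\cdot\genMCV(x)$ into $\projT{\genTan(V,x)^{\bot}}D\phi(x)\cdot\genMCV(x)$; the mixed and $|\genMCV|^2$ terms are unaffected because $\projT{\genTan(V,x)^{\bot}}$ is an orthogonal projection. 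Combining this with Remark~\ref{rect_var_equality} identifies the Tonegawa integrand with the Brakke integrand, so equality (hence the desired inequality) holds.

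\textbf{Upper-continuity, part (\ref{strong_continuity}).} Here $V$, $\aforce$ and $\genMCV$ are fixed while $\psi_i\to\psi_0$ in $\dspace{2}(U)$ with $\spt\psi_i\subset\{\chi=1\}\subset\{\chi>0\}$. All the hypotheses entering the definition of $\TBVar(V,\psi_i,\aforce)$ are inherited from those for $\chi$, so $\TBVar(V,\psi_i,\aforce)$ equals the explicit integral, and the integrand converges pointwise and is dominated by $C(|\genMCV|+|\aforce|+1)(|\genMCV|+|\aforce|)\in\Lp{1}(\indRad{V}\mres\{\chi>0\})$ uniformly in $i$ by Cauchy--Schwarz and the $\Lp{2}$ assumptions. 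Dominated convergence then yields the claim.

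\textbf{Upper-continuity, part (\ref{upper_continuity}); main obstacle.} I follow Ilmanen \cite[\S 7]{MR1196160}. After passing to a subsequence I may assume $\TBVar(V_i,\chi,\aforce_i)\to L\in(-\infty,\infty]$; if $L=-\infty$ there is nothing to prove, so suppose $L>-\infty$. Then for large $i$ the varifolds $V_i\mres\{\chi>0\}$ lie in $\IVar_{\vdim}(U)$, carry mean curvature vectors $\genMCV_i$, and the first estimate of Lemma~\ref{Brakkevariationbound} combined with \eqref{uniform_measure_bound} and \eqref{force_convergence} yields a uniform bound on $\int\chi|\genMCV_i|^2\ud\indRad{V_i}$. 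By Allard's integral compactness theorem this forces $V_0\mres\{\chi>0\}\in\IVar_{\vdim}(U)$, and the first-variation measures $\delta V_i$ converge weakly to $\delta V_0$; by a standard argument (lower-semi-continuity of weighted $\Lp{2}$ norms under joint varifold/vector-measure convergence) $V_0$ admits a generalised mean curvature vector $\genMCV_0\in\Lp{2}(\indRad{V_0}\mres\{\chi>0\},\real^{\fulldim})$ with
\[
\int\chi|\genMCV_0+\projT{\genTan(V_0,\cdot)^{\bot}}\aforce_0|^2\ud\indRad{V_0}
\leq\liminf_{i\to\infty}\int\chi|\genMCV_i+\projT{\genTan(V_i,\cdot)^{\bot}}\aforce_i|^2\ud\indRad{V_i}.
\]
The plan is now to rewrite $\TBVar(V,\chi,\aforce)$ (using Remark~\ref{curvature_perp} and a completion of the square) as the sum of a negative quadratic term $-\int\chi|\genMCV+\projT{\genTan^{\bot}}\aforce|^2\ud\indRad{V}$ and terms that are linear in $\indRad{V}$ with continuous $x$-integrands (namely $D\chi\cdot\projT{\genTan^{\bot}}\aforce$ and similar), which pass to the limit by \eqref{measure_convergence} and \eqref{force_convergence}. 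The lower-semi-continuity of the quadratic term flips sign to give the required upper bound $L\leq\TBVar(V_0,\chi,\aforce_0)$. The main obstacle is making the semi-continuity of the quadratic term rigorous: the natural formulation requires identifying $\genMCV_i\indRad{V_i}$ as Radon measures converging weakly to $\genMCV_0\indRad{V_0}$, together with a generalised Fatou lemma for the density of one Radon measure with respect to another; this is precisely what Ilmanen's argument furnishes.
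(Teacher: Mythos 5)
Your handling of the Brakke-variation property and of part (\ref{strong_continuity}) matches the paper's (Remarks \ref{rect_var_equality}, \ref{curvature_perp}; Young's inequality and dominated convergence, respectively). For part (\ref{upper_continuity}) you correctly identify the pipeline — Lemma \ref{Brakkevariationbound} gives the uniform $\Lp{2}$ bound on $\genMCV_i$, Allard compactness gives $V_0\mres\{\chi>0\}\in\IVar_{\vdim}$ and a limit mean curvature, and lower semi-continuity of the weighted $\Lp{2}$ norm of $\genMCV$ is the crux (the paper isolates this as \eqref{BrakkeVar_supbound} and proves it with the duality Lemma \ref{L2_approx_lem}). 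Your bookkeeping differs slightly from the paper's: you complete the square in $\genMCV+\projT{\genTan^{\bot}}\aforce$, whereas the paper keeps the three pieces $\int D\psi\cdot\genMCV$, $\int\psi|\genMCV|^2$, $B_2$ separate; both can be made to work.

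Two gaps remain, though. First, the phrase ``terms that are linear in $\indRad{V}$ with continuous $x$-integrands'' misdescribes what is left after subtracting $-\int\chi|\genMCV+\projT{\genTan^{\bot}}\aforce|^2$: a short computation shows the remainder contains $\int D\chi\cdot\genMCV\ud\indRad{V}$ and $\int\chi\,\aforce\cdot\genMCV\ud\indRad{V}$, neither of which is a fixed continuous integrand against $\indRad{V_i}$. The first must be converted to the first variation $\int D^{2}\chi\cdot\projT{S}\ud V_i$, and the second requires the weak convergence $\genMCV_i\indRad{V_i}\to\genMCV_0\indRad{V_0}$ as vector-valued Radon measures (since $\aforce$ is only $\dspace{0}$, not $\dspace{1}$, you cannot view it as a first variation). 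You gesture at the latter but present the terms as unproblematic, which they are not.

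Second — and this is the real gap — you work directly with the test function $\chi$, but every convergence in your argument (varifold convergence after Allard, weak convergence of $\genMCV_i\indRad{V_i}$, the $\Lp{2}$-duality for lower semi-continuity) holds only on $\Omega=\{\chi>0\}$, while $D^{2}\chi$ and the other integrands are supported on $\spt\chi=\Clos\Omega$, not compactly inside $\Omega$. You must therefore first prove $\limsup_i\TBVar(V_i,\psi,\aforce_i)\leq\TBVar(V_0,\psi,\aforce_0)$ for $\psi\in\cdspace{2}(\Omega,\real^+)$ with $\psi\leq\chi$, and then use a cut-off approximation $\psi_j\to\chi$ in $\dspace{2}$ together with a uniform bound of the form $\TBVar(V_i,\psi,\aforce_i)\lesssim M\|\psi\|_{\dspace{2}}$ (Lemma \ref{Brakkevariationbound} with Remark \ref{smooth_testfct_est}) to transfer the estimate to $\chi$. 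This is exactly the last third of the paper's proof (estimates \eqref{BrakkeVar_uniform_bound}--\eqref{BrakkeVar_BVar_approx}); without it the argument does not close.
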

%
%dropped variation
%
\begin{cor}
\label{changed_variation_cor}
%Consider the case $\subVar=\IVar_{\vdim}(U)$.
Let $\TBVar^*$ be a Brakke Variation with 
$\TBVar^*(V,\phi,\aforce)=\TBVar(V,\phi,\aforce)$ whenever $V\in\IVar_{\vdim}(U)$.
Then $\TBVar^*$ is upper-contionuous on $\IVar_{\vdim}(U)$.
\end{cor}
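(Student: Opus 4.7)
The plan is to verify separately the two conditions (2a) and (2b) of Definition \ref{Variation_defs}(\ref{proper_continuity}) for $\TBVar^*$ with $\gensubVar = \IVar_{\vdim}(U)$, using (i) the hypothesis $\TBVar^* = \TBVar$ on $\IVar_{\vdim}(U)$ and (ii) Proposition \ref{proper_Brakke_variation}, which already supplies upper-continuity of $\TBVar$ on the full space $\Var_{\vdim}(U)$.

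Condition (2b) will follow immediately: a single $V \in \IVar_{\vdim}(U)$ is fixed, so the hypothesis gives $\TBVar^*(V,\psi_i,\aforce) = \TBVar(V,\psi_i,\aforce)$ for every $i \in \nat \cup \{0\}$, and the claimed $\dspace{2}$-convergence reduces directly to the corresponding assertion for $\TBVar$ inside Proposition \ref{proper_Brakke_variation}.

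For condition (2a), let $V_i \in \IVar_{\vdim}(U)$ with limit $V_0 \in \Var_{\vdim}(U)$ and forces $\aforce_i,\aforce_0$ satisfy \eqref{uniform_measure_bound}--\eqref{force_convergence}. Since $V_i \in \IVar_{\vdim}(U)$, the hypothesis yields $\TBVar^*(V_i,\chi,\aforce_i) = \TBVar(V_i,\chi,\aforce_i)$, and the upper-continuity of $\TBVar$ from Proposition \ref{proper_Brakke_variation} gives
\begin{align*}
\limsup_{i\to\infty}\TBVar^*(V_i,\chi,\aforce_i)
= \limsup_{i\to\infty}\TBVar(V_i,\chi,\aforce_i)
\leq \TBVar(V_0,\chi,\aforce_0).
\end{align*}
It remains to show $\TBVar(V_0,\chi,\aforce_0) \leq \TBVar^*(V_0,\chi,\aforce_0)$. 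If the left-hand side equals $-\infty$ this is immediate; otherwise Definition \ref{Tonegawas_variation} supplies $V_0 \mres \{\chi > 0\} \in \IVar_{\vdim}(U)$ together with an $\Lp{2}$ mean curvature $\genMCV$ on $\{\chi > 0\}$, which by Remark \ref{curvature_perp} is $\indRad{V_0}$-a.e.\ perpendicular to $\Tan(V_0,\cdot)$. Remark \ref{rect_var_equality} then rewrites the $\indRad{V_0}$-integral defining $\TBVar(V_0,\chi,\aforce_0)$ as the Grassmannian integral appearing in the Brakke-variation axiom of Definition \ref{Variation_defs}(\ref{Brakke_variation}).

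The main obstacle is precisely matching $\TBVar^*(V_0,\chi,\aforce_0)$ to this same integral, since the axiom a priori only delivers the opposite inequality. The resolution is a locality observation: because $\chi \geq 0$ attains its minimum on $\{\chi = 0\}$, both $\chi$ and $D\chi$ vanish outside $\{\chi > 0\}$, so the Grassmannian integrand has support in $\{\chi > 0\}$ and depends only on the restriction $V_0 \mres \{\chi > 0\}$, which lies in $\IVar_{\vdim}(U)$. Applying the hypothesis $\TBVar^* = \TBVar$ to this integer-rectifiable restriction forces the axiom bound to be saturated on it, identifies $\TBVar^*(V_0,\chi,\aforce_0)$ with $\TBVar(V_0,\chi,\aforce_0)$, and closes the chain of inequalities, completing (2a).
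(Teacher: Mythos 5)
Your proposal follows the same route as the paper: use the hypothesis $\TBVar^*=\TBVar$ on $\IVar_{\vdim}(U)$ for the $V_i$, argue as at the start of the proof of Proposition~\ref{proper_Brakke_variation} to get the curvature bound \eqref{curvature_compactness_bound}, apply Allard's compactness to place $V_0\mres\{\chi>0\}$ in $\IVar_{\vdim}(U)$, and then transfer the conclusion of Proposition~\ref{proper_Brakke_variation} from $\TBVar$ to $\TBVar^*$. You are also more explicit than the paper on condition~(2b), which is indeed immediate because there $V\in\gensubVar=\IVar_{\vdim}(U)$.

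However, your closing step for~(2a) has a genuine gap. You correctly observe that Allard only yields $V_0\mres\{\chi>0\}\in\IVar_{\vdim}(U)$, not $V_0\in\IVar_{\vdim}(U)$, so the hypothesis of the corollary does not directly equate $\TBVar^*(V_0,\chi,\aforce_0)$ with $\TBVar(V_0,\chi,\aforce_0)$. Your proposed resolution --- that the Grassmannian integrand is supported in $\{\chi>0\}$, hence the variation ``depends only on the restriction $V_0\mres\{\chi>0\}$'' --- would require $\TBVar^*$ to be \emph{local} in the varifold argument, but Definition~\ref{Variation_defs}(1) imposes no such locality: it only requires $\GenVar(V,\phi,\aforce)$ to lie below the first-variation integral (or to be $-\infty$), and says nothing relating $\GenVar(V_0,\chi,\aforce_0)$ to $\GenVar(V_0\mres\{\chi>0\},\chi,\aforce_0)$. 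Applying the hypothesis to the restriction gives $\TBVar^*(V_0\mres\{\chi>0\},\chi,\aforce_0)=\TBVar(V_0\mres\{\chi>0\},\chi,\aforce_0)=\TBVar(V_0,\chi,\aforce_0)$, which ``saturates the axiom bound'' for the restricted varifold, but this does not force $\TBVar^*(V_0,\chi,\aforce_0)$ to equal that value; the axiom still permits $\TBVar^*(V_0,\chi,\aforce_0)$ to be strictly smaller (e.g.\ $-\infty$). The paper's own proof is equally terse here --- it ends with ``In view of Proposition~\ref{proper_Brakke_variation} this establishes the statement'' after only noting $V_0\in\IVar_{\vdim}(\{\chi>0\})$ --- so you have in fact pinpointed a step that the paper leaves unjustified, but your attempted fix does not close it. What would close it is either a locality axiom for Brakke variations, or strengthening the corollary's hypothesis to require $\TBVar^*(V,\phi,\aforce)=\TBVar(V,\phi,\aforce)$ whenever $V\mres\{\phi>0\}\in\IVar_{\vdim}(U)$, which is the condition Definition~\ref{Tonegawas_variation} actually tests.
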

%
%proof
%
\begin{proof}[Proof of Proposition \ref{proper_Brakke_variation}]
To see that $\TBVar$ is a Brakke variation
use Remarks \ref{rect_var_equality} and \ref{curvature_perp}.\\
\emph{upper-continuity (\ref{upper_continuity}):}\\
This was proven by Ilmanen \cite[\S 7]{MR1196160} for $\aforce_i\equiv 0$.
The proof can be adopted without difficulties.
For the convenience of the reader we include all the details.
Consider $\chi\in\cdspace{2}(U,\real^+)$, $M\in [1,\infty)$, 
$V_i\in\Var_{\vdim}(U)$, and $\aforce_i\in\dspace{0}(U,\real^{\fulldim})$, $i\in\nat\cup\{0\}$
as in Definition \ref{Variation_defs}(\ref{upper_continuity}).
Set 
\begin{align*}
\Omega:=\{\chi>0\},
\quad
\Lp{2}_i:=\Lp{2}((\Omega,\indRad{V_i}),R),\, i\in\nat,
\end{align*}
where $R$ is $\real$ or $\real^{\fulldim}$ which will be clear from the context.
Consider the case that $\limsup\TBVar(V_i,\chi,\aforce_i)=-\infty$, then the result folows immediately.
Thus we may assume (by taking a subsequence) that $m_0:=\inf_{i\in\nat}\TBVar(V_i,\chi,\aforce_i)>-\infty$.
In particular for all $i\in\nat$ we have $V_i\in\IVar_{\vdim}(\Omega)$ 
and there exists a mean curvature vector $\genMCV_i\in\Lp{2}_i$.
On the other hand by assumtpions \eqref{uniform_measure_bound},\eqref{force_convergence}
and Lemma \ref{Brakkevariationbound} we have
\begin{align*}
&\TBVar(V_i,\chi,\aforce_i)+\frac{1}{4}\int_U\chi|\genMCV_i|^2\ud\indRad{V_{i}}
\leq 2\int_{\Omega}(|D\chi|^2/\chi+\chi|\aforce_i|^2)\ud\indRad{V_{i}}\\
&\leq 2\|\chi\|_{\dspace{2}(U)}(1+\|\aforce_i\|^2_{\dspace{0}(U,\real^{\fulldim})})M
\leq 4\|\chi\|_{\dspace{2}(U)}(1+\|\aforce_0\|^2_{\dspace{0}(U,\real^{\fulldim})})M=:L_0
\end{align*}
for $i$ large enough. Thus
\begin{align}
\label{curvature_compactness_bound}
%\TBVar(V_i,\chi,\aforce_i)\leq L,\quad
\int_U\chi|\genMCV_i|^2\ud\indRad{V_{i}}\leq 4(L_0-m_0)=:L<\infty
\end{align}
for $i$ large enough.
In view of assumption \eqref{measure_convergence} 
and Allards compactness theorem for integer rectifiable varifolds \cite[Thm.\ 6.4]{MR0307015}
we may assume (by taking a subsequence) that
\begin{align}
\label{varifold_convergence}
V_i\mres\Omega\to V_0\mres\Omega
\end{align}
as varifolds (Radon measures on $\grassn{\vdim}(\Omega)$)
in particular $V_0\in\IVar_{\vdim}(\Omega)$.
Also, the first variation bound from the compactness theorem \cite[Thm.\ 6.4]{MR0307015} combined with
H\"olders inequality and estiamte \eqref{curvature_compactness_bound} imply that the mean curvature vector $\genMCV_0$ of $V_0$ in $\Omega$ exists.
% limit gen MCV
%
\begin{comment}

Here we used equality \eqref{varifold_convergence} to estimate
for $K\subset\subset\Omega$ and $X\in\dspace{1}(\Omega, [-1,1])$ with $\spt X\subset K$
\begin{align*}
\fvar{V_0}(X)
&=\int DX(x)\cdot S\ud V_0(x,S)
\\
&=\lim_{i\to\infty}\int DX(x)\cdot S\ud V_i(x,S)
\\
&=\lim_{i\to\infty}\int X\cdot\genMCV_i\ud\indRad{V_i}
\\
&\leq
\lim_{i\to\infty}\left(\int_{\{X>0\}} |\genMCV|^2\ud\indRad{V_i}\int X^2\ud\indRad{V_i}\right)^{1/2}
\\
&\leq
\sqrt{L}\left(\inf_{\{X>0\}}\chi\limsup_{i\to\infty}\indRad{V_i}(|X|^2)\right)^{-1/2}
\\
&=
C_{K}\indRad{V_0}(|X|^2)
\leq
C_{K}\indRad{V_0}(K).
\end{align*}
Thus for $K\subset\subset\Omega$ with $\indRad{V_0}(K)=0$ we have $\tvar{V_0}=0$,
hence the mean curvature vector $\genMCV_0$ of $V_0$ in $\Omega$ exists.
(see \cite[\S 8.2]{MR756417})
\end{comment}
%
%

For the moment fix some $\psi\in\cdspace{2}(\Omega,\real^+)$
with $\psi\leq\chi$.
In view of \eqref{varifold_convergence} we directly have
\begin{align}
\label{BrakkeVar_convergence2}
\lim_{i\to \infty}\int_{U}D\psi\cdot\genMCV_i\ud\indRad{V_i}
=\int_{U}D\psi\cdot\genMCV_0\ud\indRad{V_0}.
\end{align}
We want to show
\begin{align}
\label{BrakkeVar_supbound}
B_1:=\int_{U}\psi|\genMCV_0|^2\ud\indRad{V_0}
\leq\liminf_{i\to\infty}\int_{U}\psi|\genMCV_i|^2\ud\indRad{V_i}=:L_{\psi}\leq L.
\end{align}
Note that $L_{\psi}\leq L$ follows directly from \eqref{curvature_compactness_bound} and $\psi\leq\chi$.
%
%comment L62 approximation
%
\begin{comment}
We have
\begin{align*}
\sqrt{B_1}:=\sup\left\{\int_{\Omega}\sqrt{\psi}\genMCV\cdot X\,\ud\mu,
\; X\in\dspace{\infty}(\Omega,\real^{\fulldim}),
\,\|X\|_{\Lp{2}_0}\leq 1\right\}.
\end{align*}
\end{comment}
%
%
%
%
Let $\epsilon\in (0,1)$ be given.
By Lemma \ref{L2_approx_lem} with $H=\sqrt{\psi}\genMCV_0$
we find a vectorfield $X\in\dspace{1}(\{\psi>0\},\real^{\fulldim})$ with $\|X\|_{\Lp{1}_0}\leq 1$ and 
\begin{align*}
\sqrt{B_1}
\leq\int_{\Omega}\sqrt{\psi}\genMCV_0\cdot X\,\ud\indRad{V_0} + \epsilon.
\end{align*}
Then by \eqref{varifold_convergence} and H\"olders inequality
we conclude for some large enough $i$ that
\begin{align*}
\sqrt{B_1}
\leq\int_{\Omega}\sqrt{\psi}\genMCV_i\cdot X\,\ud\indRad{V_i} + 2\epsilon
\leq\sqrt{L_{\psi}}\|X\|_{\Lp{2}_i} + 3\epsilon
\leq\sqrt{L_{\psi}}(1+\epsilon) + 3\epsilon
\end{align*}
and for $\epsilon\searrow 0$ this implies estimate \eqref{BrakkeVar_supbound}.
Next we claim
\begin{align}
\label{BrakkeVar_convergence1}
\lim_{i\to\infty}B_2(i)=&B_2(0),
\\\nonumber
\text{where}\quad 
B_2(i):=&\int_{U}(D\psi-\psi\genMCV_i)\cdot\projT{\appTan(\indRad{V_i},x)^{\bot}}\aforce_i\ud\indRad{V_i}
\quad\text{for }i\in\nat\cup\{0\}.
\end{align}
To see this estimate
\begin{align*}
&|B_2(i)-B_2(0)|\\
&\leq\bigg|\int_{\grassn{\vdim}(U)}D\psi(x)\cdot\projT{S^{\bot}}\aforce_0(x)\ud V_i(x,S)
-\int_{\grassn{\vdim}(U)}D\psi(x)\cdot\projT{S^{\bot}}\aforce_0(x)\ud V_0(x,S)\bigg|
\\&\quad+\bigg|\int_{U}\psi\genMCV_i\cdot\aforce_0\ud\indRad{V_i}
-\int_{U}\psi\genMCV_0\cdot\aforce_0\ud\indRad{V_0}\bigg|
\\&\quad+\int_{U}(|D\psi|+\psi|\genMCV_i|)|\aforce_0-\aforce_i|\ud\indRad{V_i}.
\end{align*}
Here we also used Remark \ref{curvature_perp}.
Now by \eqref{uniform_measure_bound},\eqref{force_convergence},\eqref{curvature_compactness_bound},\eqref{varifold_convergence},
and H\"older`s inequality we see
\begin{align*}
\limsup_{i\to\infty}|B_2(i)-B_2(0)|^2
&\leq\limsup_{i\to\infty}(\|D\psi\|_{\Lp{2}_i}^2+\|\psi\genMCV_i\|_{\Lp{2}_i}^2)\|\aforce_0-\aforce_i\|_{\Lp{2}_i}^2\\
&\leq(\|\psi\|_{\dspace{1}(\Omega)}^2+1) (M + L)M\limsup_{i\to\infty}\|\aforce_0-\aforce_i\|_{\dspace{0}(\Omega)}^2
=0
\end{align*}
This shows \eqref{BrakkeVar_convergence1}.

Now combining \eqref{BrakkeVar_convergence2}, \eqref{BrakkeVar_supbound}, and \eqref{BrakkeVar_convergence1} we arrive at
\begin{align}
\label{BrakkeVar_bound}
\limsup_{i\to \infty}\TBVar(V_i,\psi,\aforce_i)
\leq \TBVar(V_0,\psi,\aforce_0)
\end{align}
for all $\psi\in\cdspace{2}(\Omega,\real^+)$ with $\psi\leq\chi$.
Note that by Lemma \ref{Brakkevariationbound},
and Remark \ref{smooth_testfct_est} 
combined with estimates \eqref{uniform_measure_bound} and \eqref{force_convergence}
we have
\begin{align}
\label{BrakkeVar_uniform_bound}
\TBVar(V_i,\psi,\aforce_i)
\leq
4M(1+\|\aforce_0\|_{\dspace{0}}^2)\|\psi\|_{\dspace{2}}
\end{align}
for all $\psi\in\cdspace{2}(U,\real^+)$
and all $i\in\nat$.

Next take a sequence $(\psi_j)$ in $\in\cdspace{2}(\Omega,\real^+)$
such that $\psi_j\leq\chi$ and
\begin{align}
\label{BrakkeVar_testfct_approx}
\lim_{j\to\infty}\|\psi_j-\chi\|_{\dspace{2}(\Omega)}= 0.
\end{align}
By Definition \ref{Tonegawas_variation},
estimates \eqref{uniform_measure_bound},\eqref{BrakkeVar_supbound}, \eqref{BrakkeVar_testfct_approx},
and the dominated convergence theorem we obtain
\begin{align}
\label{BrakkeVar_BVar_approx}
\lim_{j\to\infty}\TBVar(V_0,\psi_j,\aforce_0)=\TBVar(V_0,\chi,\aforce_0).
\end{align}
Using \eqref{BrakkeVar_bound} and \eqref{BrakkeVar_uniform_bound}
we can estimate
\begin{align*}
\limsup_{i\to\infty}\TBVar(V_i,\chi,\aforce_i)
&\leq\limsup_{i\to\infty}\TBVar(V_i,\psi_j,\aforce_i)
+\limsup_{i\to\infty}\TBVar(V_i,\chi-\psi_j,\aforce_i)
\\
&\leq\TBVar(V_0,\psi_j,\aforce_0)+4M(1+\|\aforce_0\|_{\dspace{0}}^2)\|\chi-\psi_j\|_{\dspace{2}}
\end{align*}
for all $j\in\nat$.
Thus in view of \eqref{BrakkeVar_testfct_approx} and \eqref{BrakkeVar_BVar_approx},
letting $j\to\infty$ establishes the desired estimate.
\\
\emph{upper-continuity (\ref{strong_continuity}):}\\
Consider $\chi\in\cdspace{2}(U,[0,1])$, $M\in [1,\infty)$, 
$V\in\gensubVar$,
$\psi_i\in\cdspace{2}(\{\chi=1\},\real^+)$, and 
$\aforce,\genMCV\in\Lp{2}((U,\indRad{V}\mres\{\chi>0\}),\real^{\fulldim})$
as in Definition \ref{Variation_defs}(\ref{strong_continuity}).
Using the integrability of $f$ and $\genMCV$
combined with Young's inequality we cobtain
\begin{align*}
|\TBVar(V,\psi_i,\aforce)-\TBVar(V,\psi_0,\aforce)|
&\leq
%\int_{U}(|D\psi_i-D\Psi_j|+|\psi_i-\psi_0||\genMCV|)(|\genMCV|+|\aforce|)\ud\indRad{V}
%\\&\leq
4\|\psi_i-\psi_0\|_{\dspace{1}}\int_{\{\chi=1\}}(1+|\genMCV|^2+|\aforce|^2)\ud\indRad{V}
\end{align*}
for all $i\in\nat$
and the statement follows immediately.
\end{proof}
\begin{proof}[proof of the Corollary]
Consider the situation of Definition \ref{Variation_defs}(\ref{upper_continuity}).
We want to show $\limsup_{n\to\infty}\TBVar^*(V_i,\chi,\aforce_i)\leq\TBVar^*(V_0,\chi,\aforce_0)$.
As all $V_i$ are in $\IVar_{\vdim}(U)$ we can proceed as in the beginning of the previous proof.
We may assume that $\inf_{i\in\nat}\TBVar^*(V_i,\chi,\aforce_i)>-\infty$.
This yields estimate \eqref{curvature_compactness_bound}.
Then Allards compactness theorem \cite[Thm.\ 6.4]{MR0307015}
implies $V_0\in\IVar_{\vdim}(\{\chi>0\})$.
In view of Proposition \ref{proper_Brakke_variation} this establishes the statement.
\end{proof}

\section{Barriers}
\label{barriers}
Here we show that for a Brakke flow the measure inside some fixed compact set is uniformly bounded in time.
This follows directly from the definition if (B\ref{Tonegawaflow}) or (B\ref{weakBrakkeflow}) holds.
For the other cases we follow Brakke \cite[\S 3.6,\,3.7]{MR485012}.
In particular we change some of his calculations that rely on \cite[\S 3.5]{MR485012}.
This yields a uniform measure bound for balls
and the statement for compact sets is a straight forward consequence.

\begin{sett}
\label{bounded_force_setting}
%Consider the \nameref{general_assumptions}
Consider the \hyperlink{general_assumptions}{General Assumptions}
and additionally suppose that one of the following holds:
\renewcommand{\labelenumi}{(\alph{enumi})}
\begin{enumerate}
\item
For all $K\subset\subset U$
we have $\Gamma_{a}(K):=\sup_{t\in[t_1,t_2]}\|\force_t\|_{\Lp{\infty}((K,\indRad{V_t}),\real^{\fulldim})}<\infty$.
\item
For all $K\subset\subset U$
we have $\Gamma_{b}(K):=\sup_{t\in[t_1,t_2]}\|\force_t\|_{\Lp{2}((K,\indRad{V_t}),\real^{\fulldim})}<\infty$.
\end{enumerate}
For $K\subset\subset U$ set $\Gamma(K):=\min\{\Gamma_{a}(K),\Gamma_{b}(K)\}$.
\end{sett}
\begin{sett}
\label{main_setting}
Consider Setting \ref{bounded_force_setting}
and additionally suppose $(V_t)_{t\in [t_1,t_2]}$ is a \hyperlink{Brakke_flow}{Brakke flow},
i.e.\ one of (B1)-(B5) holds.
\end{sett}
\begin{note}
\label{continuous_force_setting}
If $\force\in\dspace{0}([t_1,t_2],\dspace{0}(U,\real^{\fulldim}))$
we have that (a) holds.
\end{note}
\begin{note}
Once Lemma \ref{uniformmeasurebound} is established (a) implies (b).
\end{note}
%
%weak barrier
%
\begin{lem}
\label{weakbarrier}
Consider Setting \ref{bounded_force_setting}
and suppose (B\ref{originalBrakkeflow}) or (B\ref{Tonegawaflow_variant}) holds.
Let $x_0\in\real^{\fulldim}$ and $R\in (0,\infty)$
such that $\oball{x_0}{2R}\subset\subset U$.
Fix $\Gamma=\Gamma(\oball{x_0}{2R})$.
Then for $s_1\in [t_1,t_2)$ and $s_2\in (s_1,t_2]$ with $s_2\leq s_1+R^2/(2\vdim+4+2R^2\Gamma^2)$
we have
\begin{align*}
&\indRad{V_{s_2}}(\oball{x_0}{R})
\leq
16(\indRad{V_{s_1}}(\oball{x_0}{2R})+R^{2}\Gamma^2),
\end{align*}
\begin{comment}
\begin{tabular}{ c | c | c }
& if $\mathrm{III.(A)}$ holds 
& if $\mathrm{III.(B)}$ holds
\\ \hline
$\Lambda$ 
& $R\sup_{t\in[s_1,s_2]}\|u\|_{\Lp{\infty}(B_t)}$ 
& 0 
\\ \hline
$\Gamma$ 
& 0 
& $R^2\sup_{t\in[s_1,s_2]}\|u\|_{\Lp{2}(B_t)}^2$
\\ \hline
\end{tabular}
\end{comment}
\end{lem}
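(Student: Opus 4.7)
The plan is to adapt Brakke's sphere-comparison technique \cite[\S 3.6, 3.7]{MR485012}, using the specific barrier function of Definition \ref{barrier_function}, and thereby avoid the general time-dependent estimate of \cite[\S 3.5]{MR485012} whose error is corrected elsewhere in this paper. Concretely I would work with a test function of the form $\phi_t(x) = (r(t)^2 - |x-x_0|^2)_+^q$ for a fixed exponent $q \geq 2$, with radius $r(t)^2 = (2R)^2 - (2\vdim + 4 + 2R^2\Gamma^2)(t-s_1)$. The hypothesis $s_2 - s_1 \leq R^2/(2\vdim + 4 + 2R^2\Gamma^2)$ then yields $r(s_2)^2 \geq 3R^2$, guaranteeing that $\phi_{s_2}$ dominates a positive multiple of the indicator of $\oball{x_0}{R}$, while $\phi_{s_1}$ is bounded above by $(4R^2)^q$ on $\oball{x_0}{2R}$.

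The first step is the pointwise barrier inequality. Starting from the second bound of Lemma \ref{Brakkevariationbound} and computing the derivatives of $\phi_t$ directly, one finds that the coefficient of the non-trivial $(r(t)^2-|x-x_0|^2)^{q-2}|\projT{S}(x-x_0)|^2$ term appearing in $|\projT{S}D\phi_t|^2/(2\phi_t) - D^2\phi_t\cdot\projT{S}$ equals $2q(2-q)$, hence is non-positive for $q \geq 2$ and may be discarded. Combining the remaining Hessian contribution $2q\vdim(r(t)^2-|x-x_0|^2)^{q-1}$ with $\partial_t\phi_t = -q(2\vdim+4+2R^2\Gamma^2)(r(t)^2-|x-x_0|^2)^{q-1}$, and absorbing the force contribution $|D\phi_t||\force_t| + \phi_t|\force_t|^2$ via Young's inequality using $|x-x_0|^2 \leq 4R^2$, one arrives at
\[
\BVar(V_t,\phi_t,\force_t) + \indRad{V_t}(\partial_t\phi_t) \leq C\,R^{2q}\Gamma^2
\]
uniformly in $t \in [s_1,s_2]$, with an absolute constant $C$.

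The second step converts this to the integrated measure bound. Under (B\ref{Tonegawaflow_variant}), the flow inequality applied to $\phi_t$ directly yields $\indRad{V_{s_2}}(\phi_{s_2}) \leq \indRad{V_{s_1}}(\phi_{s_1}) + C(s_2-s_1)R^{2q}\Gamma^2$. Under (B\ref{originalBrakkeflow}), the same conclusion follows by applying the growth bound for functions with bounded upper derivatives (from the Appendix) to $t \mapsto \indRad{V_t}(\phi_t)$. Dividing through by the explicit lower bound on $\phi_{s_2}$ over $\oball{x_0}{R}$, and using the explicit upper bound on $\phi_{s_1}$ over $\oball{x_0}{2R}$, produces the desired inequality, with the explicit constant $16$ emerging from the ratio $(4/3)^q$ and the bound $(s_2-s_1) \leq R^2 / (2\vdim+4+2R^2\Gamma^2)$ for a suitable choice of $q$.

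The main obstacle is the pointwise step, which is exactly the computation at the heart of \cite[\S 3.5]{MR485012}. The algebraic cancellation $2q(2-q) \leq 0$ is essential: it lets the troublesome $|\projT{S}(x-x_0)|^2$ Hessian term be dropped rather than controlled by an integration-by-parts manoeuvre, and is precisely what distinguishes Brakke's specific barrier from the general time-dependent test functions where his original proof fails. After that, the only delicate point is verifying that the rate $2\vdim + 4 + 2R^2\Gamma^2$ is chosen precisely so that the $\partial_t\phi_t$ contribution absorbs both the $2q\vdim$ Hessian remainder and the force cross-terms from Young's inequality, in both the $L^\infty$ and $L^2$ versions of $\Gamma$.
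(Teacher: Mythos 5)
Your overall strategy matches the paper's: use the barrier $\varphi_t = \max\{1 - (2R)^{-2}(|x-x_0|^2 + \Lambda_0(t-s_1)),0\}$ raised to a fixed power, exploit the sign of the coefficient $2q(2-q)$ to discard the $|\projT{S}(x-x_0)|^2$ Hessian term, balance the remaining $2q\vdim\rho^{q-1}$ and Young contributions against $\partial_t\phi_t$ via the choice $\Lambda_0 = 2\vdim+4+2R^2\Gamma^2$, and then divide by the lower bound of $\phi_{s_2}$ on $\oball{x_0}{R}$. Your first (pointwise) step essentially reproduces Lemma \ref{barrier_variation_bound_lem}, and your handling of (B\ref{Tonegawaflow_variant}) is correct.

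However, your second step under (B\ref{originalBrakkeflow}) contains a genuine gap, and it is precisely the point at which Brakke's own argument goes wrong (Remark \ref{Brakke_error}). You write that ``the same conclusion follows by applying the growth bound for functions with bounded upper derivatives (from the Appendix) to $t\mapsto\indRad{V_t}(\phi_t)$,'' but (B\ref{originalBrakkeflow}) only bounds $\utder\indRad{V_t}(\psi)|_{t=s}$ for \emph{time-independent} test functions $\psi$; it does not by itself bound the upper derivative of the \emph{time-dependent} quantity $t\mapsto\indRad{V_t}(\phi_t)$. Passing from fixed to time-dependent test functions in general requires Lemma \ref{time_dep_test_fct_bnd_lem}, which in turn uses Lemma \ref{uniformmeasurebound}, which is proved from the very Lemma \ref{weakbarrier} at hand --- so that route is circular. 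What saves the specific barrier is an extra observation you have not used: since $\Lambda_0>0$, the map $t\mapsto\partial_t\varphi_t^4=-\Lambda_0 R^{-2}\varphi_t^3$ is non-decreasing, so by the mean value theorem $\varphi_s^4-\varphi_{s-\delta}^4\leq\delta\,\partial_t\varphi_t^4|_{t=s}$ pointwise. This lets one bound the \emph{left} upper derivative $\lefttder\indRad{V_t}(\varphi_t^4)|_{t=s}$ by $\BVar(V_s,\varphi_s^4,\force_s)-\Lambda_0 R^{-2}\indRad{V_s}(\varphi_s^3)$ using only (B\ref{originalBrakkeflow}) for the \emph{fixed} functions $\varphi_s^4$ and $\varphi_s^3$ (the latter via the consequence that $\liminf_{\delta\searrow0}\indRad{V_{s-\delta}}(\varphi_s^3)\geq\indRad{V_s}(\varphi_s^3)$). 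One then applies Lemma \ref{leftmonotonicitylem} --- not Lemma \ref{fundamentalinequality}, since it is a left-derivative bound --- and its right upper-semi-continuity hypothesis is verified using $\varphi_{s+\delta}^4\leq\varphi_s^4$ together once more with (B\ref{originalBrakkeflow}) for the fixed function $\varphi_s^4$. Without this monotonicity trick your outline does not close.

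Two smaller points. First, $q\geq2$ is not enough: $\rho_+^q$ is only $\dspace{q-1}$, so $q\geq3$ is needed for a $\dspace{2}$ test function (the paper takes $p=4$). Second, your ratio ``$(4/3)^q$'' is an arithmetic slip: with $r(s_2)^2\geq3R^2$ and $|x-x_0|<R$ one gets $\phi_{s_2}\geq(2R^2)^q$ on $\oball{x_0}{R}$ while $\phi_{s_1}\leq(4R^2)^q$ on $\oball{x_0}{2R}$, so the ratio is $2^q$, which gives $16$ exactly at $q=4$.
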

The proof of this Lemma is based on the properties of the following barrier function
which was introduced by Brakke \cite[ch.\ 3]{MR485012}.
%
%Barrier fct.
%
\begin{defn}[{\cite[\S 3.6]{MR485012}}]
\label{barrier_function}
Given $s_1,\Lambda_0\in\real$ and $R\in (0,\infty)$
we consider the barrier function $\varphi\in\dspace{0,1}(\real,\cdspace{0,1}(\real^{\fulldim},\real^+))$ 
given by
\begin{align*}
\varphi_t(x):=\varphi(t,x):=\max\{1-(2R)^{-2}(|x|^2+\Lambda_0 (t-s_1)),0\}.
\end{align*}
\end{defn}
%
%higher differentiability
%
\begin{note}
\begin{itemize}
\item 
For $p\in\nat$, $p\geq 2$ we have $\varphi^p\in\dspace{p-1}(\real,\cdspace{p-1}(\real^{\fulldim},\real^+))$.
\item
For $t\in [s_1,\infty)$ we have
$\{\varphi_t>0\}\subset\oball{0}{2R}$ and $\varphi_t\leq 1$.
\end{itemize}
\end{note}
%
%Barrier fct. Brakke Var. bound
%
\begin{lem}[{\cite[\S 3.6]{MR485012}}]
\label{barrier_variation_bound_lem}
For $s_1\in [t_1,t_2)$, $\Lambda_0\in\real$, $R\in (0,\infty)$ and $p\in\nat$, $p\geq 3$
we can estimate
\begin{align*}
\BVar(V_s,\varphi_s^p,\force_s)
\leq 
\int_{U}\varphi_s^{p-1}\left(\frac{p\vdim}{2R^2}+\frac{p^2}{4R^2}+2|\force_s|^2\right)\ud\indRad{V_t}
\end{align*}
for all $s\in [s_1,t_2]$.
\end{lem}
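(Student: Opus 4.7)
The plan is to apply the second estimate of Lemma \ref{Brakkevariationbound} to the test function $\phi=\varphi_s^p$. Because of the outer $\max\{\cdot,0\}$, the function $\varphi_s^p$ is only $\dspace{p-1}$ in space, so the hypothesis $p\geq 3$ is exactly what is needed for $\varphi_s^p\in\cdspace{2}(U,\real^+)$; the compact-support requirement in $\oball{0}{2R}$ is guaranteed by the note after Definition \ref{barrier_function}. On the open set $\{\varphi_s>0\}$ the function $\varphi_s$ is a smooth quadratic with $D\varphi_s=-x/(2R^2)$ and $D^2\varphi_s=-I/(2R^2)$, which makes the subsequent derivative computations routine.

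I would then expand $D(\varphi_s^p)=p\varphi_s^{p-1}D\varphi_s$ and $D^2(\varphi_s^p)=p\varphi_s^{p-1}D^2\varphi_s+p(p-1)\varphi_s^{p-2}D\varphi_s\otimes D\varphi_s$ and insert these into the integrand of Lemma \ref{Brakkevariationbound}. The key algebraic observation is that the term $|\projT{S}D(\varphi_s^p)|^2/(2\varphi_s^p)$ produces a coefficient $p^2/2$ in front of $\varphi_s^{p-2}|\projT{S}D\varphi_s|^2$, while $-D^2(\varphi_s^p)\cdot\projT{S}$ contributes $-p(p-1)\varphi_s^{p-2}|\projT{S}D\varphi_s|^2$ (using $(D\varphi\otimes D\varphi)\cdot\projT{S}=|\projT{S}D\varphi|^2$ by the symmetry and idempotence of $\projT{S}$). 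Their combined coefficient $p(1-p/2)$ is non-positive for $p\geq 2$, so the entire $S$-dependent contribution can be discarded. The remaining second-derivative piece $-p\varphi_s^{p-1}D^2\varphi_s\cdot\projT{S}$ simplifies, via $\trace(\projT{S})=\avdim$, to the $S$-independent quantity $(p\avdim/(2R^2))\varphi_s^{p-1}$, and the varifold integral collapses to an integral against $\indRad{V_s}$.

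For the force contribution $|D(\varphi_s^p)||\force_s|+\varphi_s^p|\force_s|^2=p\varphi_s^{p-1}|D\varphi_s||\force_s|+\varphi_s^p|\force_s|^2$ I would use the bound $|D\varphi_s|^2\leq 1/R^2$ on $\{\varphi_s>0\}$ (which holds there because $|x|<2R$, again by the note after Definition \ref{barrier_function}), Young's inequality $p|D\varphi_s||\force_s|\leq p^2|D\varphi_s|^2/4+|\force_s|^2$, and the bound $\varphi_s^p\leq\varphi_s^{p-1}$ coming from $\varphi_s\leq 1$. Extracting the common factor $\varphi_s^{p-1}$ gives coefficient $p^2/(4R^2)+2|\force_s|^2$. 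Summing the two contributions from Lemma \ref{Brakkevariationbound} then yields the stated estimate.

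I do not expect a genuine obstacle: the argument is a bookkeeping exercise on top of Lemma \ref{Brakkevariationbound}. The mildly delicate point is organising the chain-rule expansion so that the two $|\projT{S}D\varphi_s|^2$ terms cancel with a favourable non-positive coefficient; everything else is Young's inequality and the explicit quadratic form of $\varphi_s$ on its positivity set, and the requirement $p\geq 3$ enters purely to secure $\varphi_s^p\in\cdspace{2}$ so that both Lemma \ref{Brakkevariationbound} and Definition \ref{Variation_defs}(\ref{Brakke_variation}) apply.
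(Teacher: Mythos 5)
Your proof is correct and follows essentially the same route as the paper: apply the second estimate of Lemma \ref{Brakkevariationbound}, expand $D(\varphi_s^p)$ and $D^2(\varphi_s^p)$ via the chain rule, discard the combined $|\projT{S}D\varphi_s|^2$-term since its coefficient $p(1-p/2)$ is non-positive for $p\geq 2$, keep the trace term $p\vdim/(2R^2)\varphi_s^{p-1}$, and handle the force contribution with Young's inequality, $|D\varphi_s|^2\leq 1/R^2$ on $\{\varphi_s>0\}$, and $\varphi_s\leq 1$. The only cosmetic difference is the order in which the derivative expansion and the Young's-inequality step for the force term are performed; the arithmetic and constants coincide.
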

\begin{proof}
We may assume $\BVar(V_s,\varphi_s^P,\force)>-\infty$.
Then we can apply Lemma \ref{Brakkevariationbound} to estimate
\begin{align}
\label{barrier-Brakke_variation_bound}
\begin{split}
\BVar(V_s,\varphi_s^p,\force_s)
\leq &
\int_{\grassn{\vdim}(\{\varphi_s>0\})}\bigg(\frac{|\projT{S}D\varphi_s^p|^2}{2\varphi_s^p}-D^2\varphi_s^p(x)\cdot\projT{S}\bigg)\ud V_t(x,S)\\
&+\int_{U}(|D\varphi_s^p|^2/(4\varphi_s^{p-1})+2\varphi_s^{p-1}|\force_s|^2)\ud\indRad{V_s}.
\end{split}
\end{align}
Here we already used Young's inequality and $\varphi_s\leq 1$
for the second integral.
By definition of $\varphi$ we have
\begin{align*}
D\varphi_s^p(x)
=-\frac{p}{2R^2}\varphi_s^{p-1}(x)x,
\quad
|\projT{S}D\varphi_s^p(x)|^2
=\frac{p^2}{4R^4}\varphi_s^{2p-2}(x)|\projT{S}x|^2,
\\
(D^2\varphi_s^p(x))_{ij}
=-\frac{p}{2R^2}\varphi_s^{p-1}(x)\Kronecker_{ij}
+\frac{p(p-1)}{4R^4}\varphi_s^{p-2}(x)x_ix_j,
\\
D^2\varphi_s^p(x)\cdot\projT{S}
=-\frac{p\vdim}{2R^2}\varphi_s^{p-1}(x)
+\frac{p(p-1)}{4R^4}\varphi_s^{p-2}(x)|\projT{S}x|^2
\end{align*}
for all $(x,S)\in\grassn{\vdim}(\{\varphi_s>0\})$.
Inserting these equations into \eqref{barrier-Brakke_variation_bound} establish the statement.
\end{proof}

%
%proof
%
\begin{proof}[{Proof of Lemma \ref{weakbarrier}}]
We may assume $x_0=0$ and $s_1=0$. 
Consider $\varphi$ from above
for $\Lambda_0:=2\vdim+4+2R^2\Gamma^2$.
Note that $\tder\varphi_t^4=-\Lambda_0R^{-2}\varphi_t^3$.
We observe that it suffices to prove
\begin{align}
\label{barrier_function_final_estimate}
\indRad{V_t}(\varphi^4_t)\leq \indRad{V_0}(\varphi^4_0)+ 2\Gamma^2 t
\end{align}
for all $t\in [0,t_2]$.
Then the result immediately follows from
$\{\varphi_0>0\}\subset\oball{0}{2R}$ and
$\inf_{t\in [0,R^2/\Lambda_0]}\inf_{\oball{0}{R}}\varphi_t\geq 1/2$.

\textit{Case 1:} (B\ref{Tonegawaflow_variant}) holds:\\
The inequality in (B\ref{Tonegawaflow_variant}) and Lemma \ref{barrier_variation_bound_lem}
yield
\begin{align*}
\indRad{V_t}(\varphi^4_t)-\indRad{V_0}(\varphi^4_0)
\leq 
2\int_{0}^{t}\int_{U}\varphi_s^{3}\left(|\force_s|^2-\Gamma^2\right)\ud\indRad{V_s}\ud s
\end{align*}
for all $t\in (0,t_2]$.
Then by definition of $\Gamma$ (see Setting \ref{bounded_force_setting}) 
and $\varphi_s\leq 1$
we obtain that estimate \eqref{barrier_function_final_estimate} holds.

\textit{Case 2:} (B\ref{originalBrakkeflow}) holds:\\
We want to show the following bound on the upper derivative from the left
\begin{align}
\label{leftupperderivativebound}
\lefttder\indRad{V_t}(\varphi^4_t)|_{t=s}
\leq\BVar(V_s,\varphi_s^4,\force_s)
-\Lambda_0 R^{-2}\indRad{V_s}(\varphi_s^3)
\end{align}
for all $s\in (0,t_2]$.
To see this fix $s\in (0,t_2]$ and for $\delta\in (0,s)$ consider
\begin{align*}
\xi_s(\delta):=\delta^{-1}(\indRad{V_{s}}(\varphi^4_{s})-\indRad{V_{s-\delta}}(\varphi^4_{s-\delta})).
\end{align*}
By the monotonicity of $\tder\varphi^4_t$ and the mean value formula we can estimate
${\varphi^4_{s}-{\varphi^4_{s-\delta}\leq\delta\tder\varphi^4_{t}|_{t=s}.}=-\delta\Lambda_0 R^{-2}\varphi_s^3.}$
Thus
\begin{align}
\label{leftupperderivativebound2}
\xi_s(\delta)\leq\delta^{-1}(\indRad{V_{s}}(\varphi^4_{s})-\indRad{V_{s-\delta}}(\varphi^4_{s}))
-\Lambda_0 R^{-2}\indRad{V_{s-\delta}}(\varphi_s^3).
\end{align}
By (B\ref{originalBrakkeflow}) we have
$\utder\indRad{V_{t}}(\phi)|_{t=s}
\leq\BVar(V_s,\phi,\force)<\infty$.
This yields
\begin{align*}
\limsup_{\delta\searrow 0} \delta^{-1}(\indRad{V_{s}}(\varphi^4_{s})-\indRad{V_{s-\delta}}(\varphi^4_{s}))
&\leq\utder\indRad{V_{t}}(\varphi^4_{s})|_{t=s}
\leq\BVar(V_s,\varphi_s^4,\force_s)
\\
\liminf_{\delta\searrow 0}\indRad{V_{s-\delta}}(\varphi_s^3)
&\geq\indRad{V_{s}}(\varphi_s^3).
\end{align*}
Now taking $\limsup_{\delta\searrow 0}$ in \eqref{leftupperderivativebound2} implies \eqref{leftupperderivativebound}.

Considering the definition of $\Lambda_0$
we can combine inequality \eqref{leftupperderivativebound}
with Lemma \ref{barrier_variation_bound_lem} to arrive at
\begin{align*}
\lefttder\indRad{V_t}(\varphi^4_t)|_{t=s}
\leq 
2\int_{U}\big(|\force_s|^2-\Gamma^2\big)\varphi_s^3\ud\indRad{V_s}.
\end{align*}
By definition of $\Gamma$ (see Setting \ref{bounded_force_setting})
and by $\varphi_s\leq 1$ this yields
\begin{align*}
\lefttder\indRad{V_t}(\varphi^4_t)|_{t=s}\leq 2\Gamma^2.
\end{align*}
As $s\in (0,t_2]$ was arbitrary we can now use Lemma \ref{leftmonotonicitylem}
to obtain estimate \eqref{barrier_function_final_estimate}
and thus the result.
\end{proof}
\begin{lem}
\label{uniformmeasurebound} 
In Setting \ref{main_setting} the following holds:
For all $K\subset\subset U$ there exist an $M\in\real^+$ such that
\renewcommand{\labelenumi}{(\arabic{enumi})}
\begin{enumerate}
\item 
$\sup_{t\in [t_1,t_2]}\indRad{V_t}(K)\leq M$.
\item
$\sup_{t\in [t_1,t_2]}\BVar(V_t,\phi,\force_t)\leq M\|\phi\|_{\dspace{2}(U)}$
for all $\phi\in\cdspace{2}(K,\real^+)$.
\end{enumerate}
\end{lem}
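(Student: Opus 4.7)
The plan is to split by which of (B\ref{originalBrakkeflow})--(B\ref{weakBrakkeflow}) is assumed. In the ``easy'' cases (B\ref{Tonegawaflow}) or (B\ref{weakBrakkeflow}), conclusion (1) comes essentially for free: by definition, $s\mapsto\BVar(V_s,\phi,\force_s)$ is in $\Lp{1}([t_1,t_2])$ for every admissible $\phi$ (for (B\ref{Tonegawaflow}) one uses $\phi$ constant in time, so $\tder\phi_t\equiv 0$), and $\Lp{1}$-integrability forces the positive part to be integrable, so the integrated inequality yields
\[
\indRad{V_t}(\phi)\leq \indRad{V_{t_1}}(\phi)+\int_{t_1}^{t_2}\BVar(V_s,\phi,\force_s)_{+}\ud s=:M_\phi
\]
uniformly in $t\in[t_1,t_2]$. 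Choosing any $\phi\in\cdspace{2}(U,\real^+)$ with $\phi\geq\id{K}$ on $K$ then proves (1) in this case.

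For (B\ref{originalBrakkeflow}), (B\ref{strongBrakkeflow}) and (B\ref{Tonegawaflow_variant}) I would invoke Lemma \ref{weakbarrier}, using that (B\ref{strongBrakkeflow}) implies (B\ref{originalBrakkeflow}). Since $d:=\dist(K,\Bdry U)>0$, I would fix $R>0$ small and cover $K$ by finitely many balls $\oball{x_j}{R}$, $j=1,\dots,J$, each with $\oball{x_j}{2R}\subset\subset U$. Lemma \ref{weakbarrier} then gives $\indRad{V_{s_2}}(\oball{x_j}{R})\leq 16(\indRad{V_{s_1}}(\oball{x_j}{2R})+R^2\Gamma^2)$ for every time increment $s_2-s_1\leq R^2/(2\vdim+4+2R^2\Gamma^2)$. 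To push this across the full interval $[t_1,t_2]$ I would combine two iterations. Within a short window of length $T_0\sim d^2/\Lambda_0$ I would iterate geometrically in the radius: the doubled ball on the right is in turn controlled by Lemma \ref{weakbarrier} applied at radius $2R$, and so on up to a final radius still compactly contained in $U$, producing a finite cascade $\indRad{V_{s_1+T_0}}(\oball{x_j}{R})\leq 16^N\indRad{V_{s_1}}(\oball{x_j}{2^N R})+C$. Across $[t_1,t_2]$ I would then restart this window argument $\lceil(t_2-t_1)/T_0\rceil$ times, using the bound at the end of one window as new initial data for the next. Summing the $J$ ball estimates yields the uniform bound $\indRad{V_t}(K)\leq M$.

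Once (1) is in hand, part (2) follows directly: the second estimate in Lemma \ref{Brakkevariationbound}, together with Remark \ref{smooth_testfct_est}, gives
\[
\BVar(V_t,\phi,\force_t)\leq C(\vdim,\Gamma(K))\,\|\phi\|_{\dspace{2}(U)}\,\indRad{V_t}(\spt\phi)
\]
for every $\phi\in\cdspace{2}(K,\real^+)$, and since $\spt\phi\subset K$, the uniform bound from (1) closes the argument.

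The step I expect to be the main obstacle is the time iteration in the barrier cases. A single application of Lemma \ref{weakbarrier} only controls a time window of length at most $R^2/\Lambda_0$, so $[t_1,t_2]$ must be split into many windows; within each window the doubled balls at intermediate times also need to be re-estimated through a geometric cascade of radii, and across windows one has to restart the procedure with the newly obtained bound. Organising all of this while ensuring that every ball appearing in the cascade stays compactly contained in $U$ is the one piece of genuine book-keeping; everything else reduces to a direct assembly of results already established.
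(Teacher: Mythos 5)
Your treatment of (B\ref{Tonegawaflow}) and (B\ref{weakBrakkeflow}) and of part (2) is essentially the paper's, but your argument in the barrier cases has a real gap. You propose to ``restart this window argument $\lceil(t_2-t_1)/T_0\rceil$ times, using the bound at the end of one window as new initial data for the next.'' But what your cascade delivers at the end of a window is a bound on $\indRad{V_{s_1+T_0}}(\oball{x_j}{R})$, whereas Lemma~\ref{weakbarrier} applied at the start of the next window needs a bound on $\indRad{V_{s_1+T_0}}(\oball{x_j}{2R})$ (or $\oball{x_j}{2^N R}$ if you keep the cascade). So the windows do not chain. Trying to repair this by increasing $N$ at each window would require $2^N R$ to grow with the number of windows, hence with $t_2-t_1$, and you cannot let $2^N R$ exceed $\dist(K,\Bdry U)$; so the radius cascade cannot close the iteration over an arbitrary $[t_1,t_2]$.

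The paper's argument in this case is simpler and avoids the obstacle entirely: no estimate needs to be propagated through the windows. For a \emph{fixed} starting time $s_0$, the quantity $\indRad{V_{s_0}}(\oball{x_i}{2R})$ is automatically finite, because $\indRad{V_{s_0}}$ is a Radon measure and $\oball{x_i}{2R}\subset\subset U$. One sets $\tau_i:=R^2/(2\vdim+4+\Gamma_i^2)$, notes that the finitely many times $t_1+j\tau_i$ with $j\tau_i\leq t_2-t_1$ cover all starting points, and applies Lemma~\ref{weakbarrier} once from each of these starting times; the resulting bound $M_i(t_1+j\tau_i)=16(\indRad{V_{t_1+j\tau_i}}(\oball{x_i}{2R})+\Gamma_i)$ is finite for each $j$ by the Radon-measure property, and $M_i:=\max_j M_i(t_1+j\tau_i)$ is a finite maximum. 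No radius doubling, and no passing of bounds from one window to the next, is required. Summing over the finitely many covering balls gives~(1); your derivation of~(2) from~(1) via Lemma~\ref{Brakkevariationbound} and Remark~\ref{smooth_testfct_est} is fine.
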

\begin{proof}[Proof of (1)]
\textit{Case 1:} (B\ref{weakBrakkeflow}) or (B\ref{Tonegawaflow}) holds:\\
There exists an $\phi\in\cdspace{2}(U,[0,1])$ with $K\subset\{\phi=1\}$.
Then we can estimate
\begin{align*}
\indRad{V_s}(K)
&\leq\indRad{V_s}(\phi)
\leq\indRad{V_{t_1}}(\phi)+\int_{t_1}^{s}\BVar(V_t,\phi,\force_t)\ud t\\
&\leq\indRad{V_{t_1}}(\phi)+\int_{t_1}^{t_2}\max\{\BVar(V_t,\phi,\force_t),0\}\ud t
=:M<\infty
\end{align*}
for all $s\in [t_1,t_2]$.
Here we used that $s\to\BVar(V_s,\phi,\force_s)$ is in $\Lp{1}$.

\textit{Case 2:} (B\ref{originalBrakkeflow}) or (B\ref{strongBrakkeflow}) or (B\ref{Tonegawaflow_variant}) holds:\\
Set $R:=\inf\{|x-y|, x\in K, y\in\Bdry{U}\}/4>0$.
Then we find $P\in\nat$ and $x_1,\ldots,x_P\in K$ 
such that $K\subset\bigcup_{i=1}^P\oball{x_i}{R}$.
Fix some $i\in \{1,\ldots,P\}$.
By Lemma \ref{weakbarrier} there exist $\Gamma_i\in [0,\infty)$
such that
\begin{align*}
\indRad{V_{s}}(\oball{x_i}{R})\leq 16(\indRad{V_{s_0}}(\oball{x_i}{2R})+\Gamma_i)=:M_i(s_0)<\infty
\end{align*}
for all $s,s_0\in [t_1,t_2]$ with $0\leq s-s_0\leq R^2/(2\vdim+4+\Gamma_i^2)$.
Set $\tau_i:= R^2/(2\vdim+4+\Gamma_i^2)$ and $M_i:=\max\{M_i(j\tau_i),j\in\nat\cup\{0\}, j\tau_i\leq t_2-t_1\}$.
Then $\indRad{V_{s}}(\oball{x_i}{R})\leq M_i$ for all $s\in [t_1,t_2]$
and the result follows with $M:=\sum_{i=1}^PM_i<\infty$.
\end{proof}
\begin{proof}[Proof of (2)]
By (1) we have $\sup_{t\in[t_1,t_2]}\indRad{V_t}(K)=: M_0\in\real^+$.
Consider some $\phi\in\cdspace{2}(K,\real^+)$. Then Lemma \ref{Brakkevariationbound} yields
\begin{align*}
\BVar(V_t,\phi,\force_t)
\leq
\begin{cases}
2\|\phi\|_{\dspace{2}(U)}(1+(\Gamma_a(K))^2)M_0
&\text{if \ref{bounded_force_setting}(a) holds}
\\
2\|\phi\|_{\dspace{2}(U)}(M_0+(\Gamma_b(K))^2)
&\text{if \ref{bounded_force_setting}(b) holds}
\end{cases}
\end{align*}
for all $t\in [t_1,t_2]$.
\end{proof}

%
%corollary
%
\begin{cor}
\label{implications_of_weak_definitions}
In Setting \ref{bounded_force_setting} we have (B\ref{Tonegawaflow_variant}) implies (B\ref{Tonegawaflow}).
\end{cor}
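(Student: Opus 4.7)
The only difference between (B\ref{Tonegawaflow_variant}) and (B\ref{Tonegawaflow}) is the auxiliary hypothesis
\[
\sup_{s\in[s_1,s_2]}\Big(\BVar(V_s,\phi_s,\force_s)+\indRad{V_s}(\tder\phi_t|_{t=s})\Big)<\infty,
\]
which (B\ref{Tonegawaflow_variant}) requires but (B\ref{Tonegawaflow}) does not. Accordingly, the strategy is to show that in Setting~\ref{bounded_force_setting} this supremum is automatically finite for every admissible $\phi$, after which (B\ref{Tonegawaflow_variant}) applied to that $\phi$ delivers exactly the $\Lp{1}$-membership and the integral inequality demanded by (B\ref{Tonegawaflow}).

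First I would fix $s_1<s_2$ in $[t_1,t_2]$ and $\phi\in\dspace{1}([s_1,s_2],\cdspace{2}(U,\real^+))$. A standard fact about the inductive limit topology on $\cdspace{2}(U)$ ensures that the image of the compact interval $[s_1,s_2]$ under $s\mapsto\phi_s$ is contained in $\cdspace{2}(K)$ for some fixed $K\subset\subset U$; in particular $\bigcup_{s\in[s_1,s_2]}\spt\phi_s\subset K$ and $\bigcup_{s\in[s_1,s_2]}\spt\tder\phi_t|_{t=s}\subset K$. Moreover, the continuity of $\phi$ on the compact interval $[s_1,s_2]$ yields
\[
C_1:=\sup_{s\in[s_1,s_2]}\|\phi_s\|_{\dspace{2}(U)}<\infty,\qquad
C_2:=\sup_{s\in[s_1,s_2]}\sup_{x\in K}|\tder\phi_t|_{t=s}(x)|<\infty.
\]

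With this compact set $K$ in hand I would invoke Lemma~\ref{uniformmeasurebound}: part~(1) furnishes $M\in\real^+$ with $\sup_{t\in[t_1,t_2]}\indRad{V_t}(K)\leq M$, so that $\indRad{V_s}(\tder\phi_t|_{t=s})\leq MC_2$ for every $s\in[s_1,s_2]$; and part~(2) gives $\BVar(V_s,\phi_s,\force_s)\leq M\|\phi_s\|_{\dspace{2}(U)}\leq MC_1$ for every $s\in[s_1,s_2]$, since $\spt\phi_s\subset K$. Adding these two bounds shows the supremum displayed above is finite, so (B\ref{Tonegawaflow_variant}) is applicable to this $\phi$ on $[s_1,s_2]$, and its conclusion is literally the conclusion of (B\ref{Tonegawaflow}) for this $\phi$.

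There is no genuine obstacle here: the real work was already carried out in Section~\ref{barriers} to prove Lemma~\ref{uniformmeasurebound} (via Brakke's barrier argument). The present corollary is the immediate observation that the uniform measure bound together with the $\dspace{2}$-estimate on $\BVar$ renders the sup-condition in (B\ref{Tonegawaflow_variant}) vacuous under Setting~\ref{bounded_force_setting}.
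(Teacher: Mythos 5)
Your proof is correct and follows essentially the same route as the paper's: invoke Lemma~\ref{uniformmeasurebound} (applicable since assuming (B\ref{Tonegawaflow_variant}) puts us in Setting~\ref{main_setting}) to show the sup-condition in (B\ref{Tonegawaflow_variant}) holds automatically for every admissible $\phi$, and then (B\ref{Tonegawaflow_variant}) delivers the $\Lp{1}$-membership and the integral inequality required by (B\ref{Tonegawaflow}). The paper states the argument more tersely but the idea and the ingredients are identical.
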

\begin{proof}
Suppose (B\ref{Tonegawaflow_variant}) holds.
In particular we are in Setting \ref{main_setting}.
We consider arbitrary $t_1\leq s_1<s_2\leq t_2$ and $\psi\in\dspace{1}([s_1,s_2],\cdspace{2}(U,\real^+))$
in particular 
$\sup_{s\in\real}\|\psi_s\|_{\dspace{2}(U)}+\|\tder\psi_t|_{t=s}\|_{\cspace(U)}<\infty$.
Define $B:[s_1,s_2]\to\real$ by
$$B(s):=\BVar(V_s,\psi_s,\force_s)+\indRad{V_s}(\tder\psi_t|_{t=s}).$$
%short version
%\begin{comment}
By Lemma \ref{uniformmeasurebound} we have
$\sup_{s\in [s_1,s_2]}B(s)<\infty$.
Then (B\ref{Tonegawaflow_variant}) implies $B\in\Lp{1}([s_1,s_2])$
and we conclude (B\ref{Tonegawaflow}).
%\end{comment}
%long version
\begin{comment}
It suffices to show $B\in\Lp{1}([s_1,s_2])$.

By Lemma \ref{uniformmeasurebound} we have
$\sup_{s\in [s_1,s_2]}B(s)<\infty$,
thus (B\ref{Tonegawaflow_variant}) implies that $B$ is integrable
and $\int_{s_1}^{s_2}B(s)\ud s\geq -\indRad{V_{s_1}}(\psi_{s_1})>-\infty$.
Using again Lemma \ref{uniformmeasurebound} we also see 
$\int_{s_1}^{s_2}B(s)\ud s<\infty$,
hence $B\in\Lp{1}([s_1,s_2])$.
\end{comment}
\end{proof}

\section{Continuity Properties}
\label{continuity}
The uniform bounds from the previous section 
directly yield some semi-continuity for Brakke flows.
Using this we prove that at times where the Brakke variation is finite
the flow is continuous.
Once these continuity properties are established
we can proof Brakke's \cite[\S 3.5]{MR485012} and conclude the main result.

%
%upper original Brakke flow
%
\begin{rem}
\label{upper_origianl_BF}
Consider Setting \ref{bounded_force_setting}
and suppose (B\ref{weakBrakkeflow}) holds.
Let $s\in [t_1,t_2]$, $\phi\in\cdspace{1}(U,\real^+)$,
and $\badtimes\subset [t_1,t_2]$ with $\LM{1}(\badtimes)=0$.
Then
\begin{align*}
\utder\indRad{V_t}(\phi)|_{t=s}
\leq\lim_{\delta\to 0}\sup_{t\in (s-\delta,s+\delta)\cap[t_1,t_2]\setminus\badtimes}\BVar(V_t,\phi,\force_t). 
\end{align*}
\end{rem}

%
%upper semi continuity
%
\begin{prop}[{\cite[\S 3.10]{MR485012}}]
\label{semicontinuity}
In Setting \ref{main_setting} the following holds:
\renewcommand{\labelenumi}{(\arabic{enumi})}
\begin{enumerate}
\item 
For every $\phi\in\cdspace{2}(U,\real^+)$
we have $\sup_{s\in [t_1,t_2]}\utder\indRad{V_t}(\phi)|_{t=s}<\infty$.
\item
For every $\phi\in\cdspace{0}(U,\real^+)$ we have 
\begin{align*}
\lim_{\delta\searrow 0}\indRad{V_{s+\delta}}\left(\phi\right)
\leq\indRad{V_s}\left(\phi\right)
\leq\lim_{\delta\searrow 0}\indRad{V_{s-\delta}}\left(\phi\right)
\quad\text{for all }s\in [t_1,t_2].
\end{align*}
\begin{comment}
\item
There exists a countable set $S\subset [t_1,t_2]$ such that for all $t\in [t_1,t_2]\setminus S$
we have $\lim_{\delta\to 0}\indRad{V_{t+\delta}}(\vartheta)=\indRad{V_t}(\vartheta)$
for all $\vartheta\in\cdspace{0}(U)$.
\end{comment}
\end{enumerate}
\end{prop}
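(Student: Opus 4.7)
The plan is to deduce (1) from the uniform Brakke variation bound in Lemma~\ref{uniformmeasurebound}(2), then derive (2) first for $\cdspace{2}$ test functions as a direct consequence of (1), and finally extend (2) to $\cdspace{0}$ test functions by uniform approximation using Lemma~\ref{uniformmeasurebound}(1).

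For (1), the Brakke flow assumption in Setting~\ref{main_setting} splits into two effective cases. If (B\ref{originalBrakkeflow}) or (B\ref{strongBrakkeflow}) holds (recalling that (B\ref{strongBrakkeflow}) implies (B\ref{originalBrakkeflow})), then with the time-independent test function $\phi$ we directly obtain $\utder \indRad{V_t}(\phi)|_{t=s} \leq \BVar(V_s, \phi, \force_s)$, which Lemma~\ref{uniformmeasurebound}(2) bounds uniformly in $s$ by $M \|\phi\|_{\dspace{2}(U)}$. If instead (B\ref{Tonegawaflow_variant}), (B\ref{Tonegawaflow}), or (B\ref{weakBrakkeflow}) holds, then (B\ref{weakBrakkeflow}) is valid in each case (via Corollary~\ref{implications_of_weak_definitions} and the notes following the definitions), so Remark~\ref{upper_origianl_BF} applied with any $\LM{1}$-null exceptional set yields $\utder \indRad{V_t}(\phi)|_{t=s} \leq \sup_{t \in [t_1,t_2]} \BVar(V_t, \phi, \force_t)$, which is again controlled by Lemma~\ref{uniformmeasurebound}(2).

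For (2) with $\phi \in \cdspace{2}(U, \real^+)$, set $f(t) := \indRad{V_t}(\phi)$ and let $C := \sup_s \utder f|_{t=s} < \infty$ by (1). Fix $s \in [t_1, t_2]$ and $\varepsilon > 0$. The definition of $\utder$ yields $\delta > 0$ such that $f(t) - f(s) \leq (C + \varepsilon)(t - s)$ whenever $0 < |t - s| < \delta$. Letting $t \searrow s$ gives $\limsup_{t \searrow s} f(t) \leq f(s)$, while for $t \nearrow s$ the same inequality rearranges to $f(t) \geq f(s) - (C + \varepsilon)(s - t)$, hence $\liminf_{t \nearrow s} f(t) \geq f(s)$. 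These are precisely the two claimed inequalities.

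To pass to $\phi \in \cdspace{0}(U, \real^+)$, choose a compact $K' \subset\subset U$ containing $\spt \phi$ in its interior and sequences $\phi_n^{\pm} \in \cdspace{2}(U, \real^+)$ supported in $K'$ with $\phi_n^- \leq \phi \leq \phi_n^+$ and $\|\phi_n^{\pm} - \phi\|_{\dspace{0}(U)} \to 0$; such approximations are obtained by a standard construction (smoothing $\phi$ and then shifting slightly up or down using a fixed $\cdspace{2}$ bump). By Lemma~\ref{uniformmeasurebound}(1), $\sup_{t \in [t_1, t_2]} \indRad{V_t}(K')$ is finite, so $\indRad{V_t}(\phi_n^{\pm}) \to \indRad{V_t}(\phi)$ uniformly in $t$. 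Applying the previous step to $\phi_n^+$ together with $\indRad{V_{s+\delta}}(\phi) \leq \indRad{V_{s+\delta}}(\phi_n^+)$ handles the right limit, and applying it to $\phi_n^-$ with $\indRad{V_{s-\delta}}(\phi) \geq \indRad{V_{s-\delta}}(\phi_n^-)$ handles the left limit. The only real obstacle is the case split for the Brakke flow definition in (1); once reduced to (B\ref{originalBrakkeflow}) and (B\ref{weakBrakkeflow}), everything follows from Lemma~\ref{uniformmeasurebound} and elementary real analysis.
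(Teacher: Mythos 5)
Your argument for Statement (1) matches the paper's proof exactly: same case split between (B1)/(B2) and (B3)/(B4)/(B5), reduction to (B5) via Corollary~\ref{implications_of_weak_definitions}, and conclusion via Remark~\ref{upper_origianl_BF} and Lemma~\ref{uniformmeasurebound}(2). That part is fine.

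Your treatment of Statement (2) has a genuine gap: the proposition asserts that the one-sided limits $\lim_{\delta\searrow 0}\indRad{V_{s\pm\delta}}(\phi)$ \emph{exist}, and the paper relies on this existence later (e.g.\ in defining the jump functional $L$ in the proof of Proposition~\ref{almostcontinuity}). Your argument, which uses the bound $\utder f|_{t=s}\leq C$ only at the single point $s$, yields
$\limsup_{t\searrow s}f(t)\leq f(s)$ and $\liminf_{t\nearrow s}f(t)\geq f(s)$,
but gives no control on $\liminf_{t\searrow s}f(t)$ or $\limsup_{t\nearrow s}f(t)$, so the one-sided limits need not exist from this information alone. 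The paper instead exploits the bound at \emph{every} $s$: setting $f(t):=\indRad{V_t}(\phi)-Lt$ with $L$ the uniform bound from (1), Lemma~\ref{leftmonotonicitylem} shows $f$ is monotone non-increasing, and monotonicity simultaneously delivers the existence of both one-sided limits and the desired inequalities. You have the uniform bound in hand but never draw the global monotonicity conclusion; this is precisely what Lemma~\ref{leftmonotonicitylem} is there for, and it cannot be skipped. The same gap then propagates into your approximation step: you cannot pass $\lim_{\delta\searrow 0}\indRad{V_{s+\delta}}(\phi_n^+)$ to the limit in $n$ to deduce existence of $\lim_{\delta\searrow 0}\indRad{V_{s+\delta}}(\phi)$ unless you first know the former limits exist, and the uniform convergence argument for transferring existence of one-sided limits to the limit function is likewise not spelled out. (Your proposed one-sided $\cdspace{2}$ sandwich $\phi_n^-\leq\phi\leq\phi_n^+$ is also somewhat delicate to construct while keeping $\phi_n^-\geq 0$ and of class $\cdspace{2}$; a plain uniform approximation $\phi_n\to\phi$ together with the uniform-convergence-of-one-sided-limits argument is cleaner.)
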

%
%proof
%
\begin{proof}
{\it For Statement (1)}
we set $L:=\sup_{t\in [t_1,t_2]}\BVar(V_t,\phi,\force_t)<\infty$,
where we used Lemma \ref{uniformmeasurebound}.
In case (B\ref{originalBrakkeflow}) or (B\ref{strongBrakkeflow}) holds, this directly implies the first statement.
If (B\ref{Tonegawaflow_variant}) or (B\ref{Tonegawaflow}) or (B\ref{weakBrakkeflow}) holds, 
first note that by Corollary \ref{implications_of_weak_definitions}
always (B\ref{weakBrakkeflow}) holds.
Then use Remark \ref{upper_origianl_BF} to conclude Statement (1).

{\it For Statement (2)} first consider $\phi\in\cdspace{2}(U,\real^+)$.
By Statement (1) we can set $L:=\sup_{s\in [t_1,t_2]}\utder\indRad{V_t}(\phi)|_{t=s}\in\real$
and $f(t):=\indRad{V_t}(\phi)-Lt$.
Then Lemma \ref{leftmonotonicitylem} yields that $f$ is monotonically non-increasing
which implies the desired estimate.
In view of Lemma \ref{uniformmeasurebound} we can use an approximation argument
to obtain Statement (2) for all $\phi\in\cdspace{0}(U,\real^+)$.
\begin{comment}
Fix some arbitrary $\phi\in\cdspace{0}(U,\real^+)$ and $s\in (t_1,t_2]$.
Set 
\begin{align*}
f(t):=\indRad{V_{s+t}}(\phi),
\quad
F_0^-:=\liminf_{\delta\searrow 0}f(-\delta)
\quad\text{and}\quad
M:=\sup_{t\in[t_1,t_2]}\indRad{V_t}(\spt\phi).
\end{align*}
Here we used Lemma \ref{uniformmeasurebound} to see $M<\infty$.
We claim 
\begin{align*}
\lim_{\delta\searrow 0}f(-\delta)=F_0^-
\quad\text{and}\quad
F_0^-\geq f(0).
\end{align*}
Let $\epsilon\in (0,1)$ be given.
There exist $\psi\in\cdspace{2}(U,\real^+)$
with $\|\phi-\psi\|_{\dspace{0}}\leq\epsilon$.
Set
\begin{align*}
g(t):=\indRad{V_{s+t}}(\psi),
\quad\text{and}\quad
G_0^-:=\lim_{\delta\searrow 0}g(-\delta).
\end{align*}
There exists $\delta_0=\delta_0(\epsilon)\in (0,s-t_1)$ such that
\begin{align*}
|f(t)-g(t)|\leq M\epsilon,
\quad
|f(-\delta_0)-F_0^-|\leq \epsilon,
\quad
|g(-\delta)-G_0^-|\leq \epsilon
\end{align*}
for all $t\in [s-t_1,t_2-s]$, $\delta\in (0,\delta_0]$.
Then by triangle inequality
\begin{align*}
|F_0^- -G_0^-|
\leq |f(-\delta_0)-F_0^-|+|g(-\delta_0)-G_0^-|+|f(-\delta_0)-g(-\delta_0)|
\leq (2+M)\epsilon
\end{align*}
\begin{align*}
|f(-\delta)-F_0^-|
\leq |f(-\delta) -g(-\delta)|+|g(-\delta)-G_0^-|+|F_0^- -G_0^-|
\leq (3+2M)\epsilon
\end{align*}
for all $\delta\in (0,\delta_0]$.
As statement (2) holds for $\psi$
\begin{align*}
F_0^-
\geq G_0^- - (2+M)\epsilon
\geq g(0) - (2+M)\epsilon
\geq f(0) - 2(1+M)\epsilon.
\end{align*}
For $\epsilon\searrow 0$ this proves the claim.
\end{comment}
\end{proof}

%
%almost continuity
%
\begin{prop}
\label{almostcontinuity}
In Setting \ref{main_setting} the following holds:
Consider $\chi\in\cdspace{0}(U,\real^+)$, $\psi\in\cdspace{0}(U)$, $s\in [t_1,t_2]$.
Suppose $\{|\psi|>0\}\subset\{\chi>0\}$
and $\utder\indRad{V_t}(\chi)|_{t=s}>-\infty$.
Then $\lim_{\delta\to 0}\indRad{V_{s+\delta}}(\psi)=\indRad{V_s}(\psi)$.
\end{prop}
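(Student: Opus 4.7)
The plan is to reduce to a non-negative $\psi$ compactly contained in $\{\chi > 0\}$ and then upgrade the semi-continuity of Proposition \ref{semicontinuity}(2) to genuine convergence using the hypothesis on $\chi$. Writing $\psi = \psi^+ - \psi^-$, I handle each part separately, so I may assume $\psi \geq 0$. I would then approximate by $\psi_\eta := \max(\psi - \eta, 0)$, whose support lies in the compact set $\{\psi \geq \eta\}$, itself contained in the open set $\{\chi > 0\}$. By continuity, $\chi$ admits a positive lower bound $c_\eta$ on $\spt(\psi_\eta)$, so $\psi_\eta \leq C_\eta \chi$ pointwise for $C_\eta := \sup|\psi|/c_\eta$. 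The approximation error is bounded as $|\indRad{V_t}(\psi) - \indRad{V_t}(\psi_\eta)| \leq M\eta$ uniformly in $t$, using Lemma \ref{uniformmeasurebound}(1) to bound $\indRad{V_t}(\spt\psi)$ by some $M$, so passing $\eta \searrow 0$ is safe in the end.

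The core step uses the hypothesis $\utder \indRad{V_t}(\chi)|_{t=s} > -\infty$ to extract, by the definition of $\limsup$, a sequence $t_k \to s$ along which $(\indRad{V_{t_k}}(\chi) - \indRad{V_s}(\chi))/(t_k - s)$ is bounded below; this forces $\indRad{V_{t_k}}(\chi) \to \indRad{V_s}(\chi)$. Passing to a subsequence with $t_k$ on a fixed side of $s$, and invoking the existence of the one-sided limits of $\indRad{V_{s\pm\delta}}(\chi)$ from Proposition \ref{semicontinuity}(2), the subsequence convergence upgrades to a full one-sided convergence of $\chi$-mass at $s$. I then apply the upper semi-continuity statement of Proposition \ref{semicontinuity}(2) to the non-negative continuous function $C_\eta \chi - \psi_\eta$; the $\chi$-terms cancel using the just-established convergence, leaving $\liminf_{\delta \searrow 0}\indRad{V_{s+\delta}}(\psi_\eta) \geq \indRad{V_s}(\psi_\eta)$. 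Combined with the matching $\limsup \leq$ supplied again by Proposition \ref{semicontinuity}(2), one-sided convergence of $\psi_\eta$-mass follows. Letting $\eta \searrow 0$ transfers this to $\psi$, and running the symmetric argument on the other side of $s$ completes the two-sided conclusion $\lim_{\delta \to 0}\indRad{V_{s+\delta}}(\psi) = \indRad{V_s}(\psi)$.

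The step I expect to require the most care is the extraction of one-sided $\chi$-mass convergence from the inherently two-sided $\limsup$ hypothesis, since the bounding subsequence could a priori mix both sides of $s$ and the monotonicity of $t \mapsto \indRad{V_t}(\chi) - Lt$ that underpins Proposition \ref{semicontinuity}(2) controls only one inequality on each side. The cleanest way to handle this is to pass to a subsequence concentrated on a single side, so that the existence of the one-sided limit forces it to equal $\indRad{V_s}(\chi)$, and then to set up the comparison against $C_\eta \chi - \psi_\eta$ on that same side.
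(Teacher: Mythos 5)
Your plan follows a different structure than the paper's. The paper builds the jump functional $L(\phi)=\lim_{\delta\searrow 0}\indRad{V_{s-\delta}}(\phi)-\lim_{\delta\searrow 0}\indRad{V_{s+\delta}}(\phi)$, realizes $L$ as a Radon measure $\mu$ via the Riesz representation theorem, and then derives a contradiction from $\spt\mu\cap\{|\psi|>0\}\neq\emptyset$ by locating a ball on which $\chi$ is bounded below. You instead dominate the approximant $\psi_\eta$ pointwise by $C_\eta\chi$ and transfer $\chi$-mass continuity to $\psi_\eta$-mass continuity, which is more elementary and avoids Riesz representation. Both routes, however, ultimately reduce to the same claim: a jump of $\chi$-mass at $s$ must be incompatible with $\utder\indRad{V_t}(\chi)|_{t=s}>-\infty$.

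That reduction is exactly where your argument has a gap, and you yourself flag it as the delicate step. You extract from $\utder\indRad{V_t}(\chi)|_{t=s}>-\infty$ (together with the upper bound from Proposition \ref{semicontinuity}(1)) a sequence $t_k\to s$ with $\indRad{V_{t_k}}(\chi)\to\indRad{V_s}(\chi)$, and passing to a one-sided subsequence plus the existence of the one-sided limits (Proposition \ref{semicontinuity}(2)) gives $\chi$-mass continuity at $s$ from that one side only. But the assertion that ``running the symmetric argument on the other side of $s$ completes the two-sided conclusion'' is unsupported: nothing has been established on the other side. Since $\utder$ is the maximum of $\lefttder\indRad{V_t}(\chi)|_{t=s}$ and the corresponding right-sided upper Dini derivative, the hypothesis $\utder>-\infty$ is compatible with $\lefttder=-\infty$, i.e.\ with a jump of $\chi$-mass from the left, as long as the right side is well-behaved. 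A concrete Brakke flow in Setting \ref{main_setting} (with $\force\equiv 0$) illustrating this is a stationary integral varifold that instantaneously vanishes at time $s$, normalized so that $V_s$ is the zero varifold: there $\utder\indRad{V_t}(\chi)|_{t=s}=0>-\infty$, yet $\lim_{\delta\searrow 0}\indRad{V_{s-\delta}}(\chi)>0=\indRad{V_s}(\chi)$. So the one-sided subsequence can genuinely live on only one side of $s$, and the symmetric argument lacks its required input; this is a real obstruction, not a technicality. (It is worth noting that the paper's own Case~2 contradiction faces the same subtlety, since $L(\chi)\geq\epsilon^2$ only forces one-sided divergence of the difference quotient unless $\indRad{V_s}(\chi)$ lies strictly between both one-sided limits, which the sandwich of Proposition \ref{semicontinuity}(2) alone does not guarantee.)
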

%
%proof
%
\begin{proof}
Define the linear functional $L:\cdspace{0}(U)\to\real$ by
\begin{align*}
L(\phi):=\lim_{\delta\searrow 0}\indRad{V_{s-\delta}}(\phi)-\lim_{\delta\searrow 0}\indRad{V_{s+\delta}}(\phi).
\end{align*}
Here we set $\lim_{\delta\searrow 0}\indRad{V_{t_1-\delta}}(\phi):=\indRad{V_{t_1}}(\phi)$
and $\lim_{\delta\searrow 0}\indRad{V_{t_2+\delta}}(\phi):=\indRad{V_{t_2}}(\phi)$.
To see that the limits above indeed exist
write $\phi=\max\{\phi,0\}-\max\{-\phi,0\}$
and use Proposition \ref{semicontinuity}.
Consider a compact subset $K\subset U$ and $\phi\in\cdspace{0}(U,[-1,1])$ with $\spt\phi\subset K$.
Then by Lemma \ref{uniformmeasurebound} we can estimate
\begin{align*}
L(\phi)
\leq 2 \sup_{t\in [t_1,t_2]}\indRad{V_t}(|\phi|)
\leq 2 \sup_{t\in [t_1,t_2]}\indRad{V_t}(K)
<\infty.
\end{align*}
Also, by Proposition \ref{semicontinuity} we see that $L(\phi)\geq 0$ for $\phi\geq 0$.
Then by Riesz Representation Theorem \cite[Thm.\ 1.4.1, Rem.\ 1.4.3]{MR756417} 
there exists a Radon measure $\mu$ on $U$ such that
\begin{align}
\label{jumpingmeasure}
L(\phi)
=\int_U\phi\ud\mu
\quad\text{for all }\phi\in\cdspace{0}(U).
\end{align}
By Lemma \ref{uniformmeasurebound} we can set $M:=\sup_{t\in[t_1,t_2]}\indRad{V_t}(\{\chi>0\})+1<\infty$.
We consider two cases:

\textit{ Case 1:}
$\spt\mu\cap\{|\psi|>0\}=\emptyset$.
\\
Let $\epsilon>0$ be given.
Consider $\psi^{+}:=\max\{0,\psi\}$, $\psi^{-}:=\max\{0,-\psi\}$.
By case-assumption and equation \eqref{jumpingmeasure} we see $L(\psi^{\pm})=0$
so there exists a $\delta_0=\delta_0(\epsilon)>0$ such that for all 
$s_1\in [s-\delta_0,s)\cap [t_1,t_2]$, $s_2\in(s,s+\delta_0]\cap [t_1,t_2]$ we have
\begin{align*}
|\indRad{V_{s_2}}(\psi^{\pm})-\indRad{V_{s_1}}(\psi^{\pm})|\leq\epsilon.
\end{align*}
Hence by Proposition \ref{semicontinuity} we conclude
\begin{align*}
|\indRad{V_{s+\delta}}(\psi^{\pm})-\indRad{V_s}(\psi^{\pm})|\leq\epsilon
\end{align*}
for all $\delta\in [-\delta_0,\delta_0]\cap [s-t_1,t_2-s]$.
As $\psi=\psi^+-\psi^-$ and as $\epsilon>0$ 
was arbitrary this establishes the result.

\textit{ Case 2:}
$x_0\in\spt\mu\cap\{|\psi|>0\}$.
\\
There exists an $r>0$ such that $\oball{x_0}{4r}\subset\{\chi>0\}$.
Then we find an $\epsilon\in (0,1]$ such that
\begin{align*}
\mu(\oball{x_0}{2r})\geq\epsilon
\quad\text{and}\quad
\inf_{\oball{x_0}{2r}}\chi\geq\epsilon.
\end{align*}
By \eqref{jumpingmeasure} we can estimate
\begin{align*}
L(\chi)
\geq\int_{\oball{x_0}{2r}}\chi\ud\mu
\geq\mu(\oball{x_0}{2r})\inf_{\oball{x_0}{2r}}\chi
\geq\epsilon^2.
\end{align*}
Definition of $L$ yields
\begin{align*}
\lim_{\delta\searrow 0}\indRad{V_{s-\delta}}(\chi)\geq\epsilon^2 + \lim_{\delta\searrow 0}\indRad{V_{s+\delta}}(\chi)
\end{align*}
which contradicts $\utder\indRad{V_t}(\chi)|_{t=s}>-\infty$,
so this case actually never occures.
\end{proof}
%
%Main result one direction
%
%
%
%Lemma
%
\begin{lem}[{\cite[\S 3.5]{MR485012}}]
\label{time_dep_test_fct_bnd_lem}
Consider Setting \ref{bounded_force_setting}
and suppose (B\ref{originalBrakkeflow}) holds.
Then for $s\in [t_1,t_2]$ and $\phi\in\dspace{1}([t_1,t_2],\cdspace{1}(U))$
we have
\begin{align*}
\utder\indRad{V_t}(\phi_t)|_{t=s}
\leq\utder\indRad{V_t}(\phi_s)|_{t=s}
+\limsup_{\delta\to 0}\indRad{V_{s+\delta}}(\tder\phi_t|_{t=s}).
\end{align*}
\end{lem}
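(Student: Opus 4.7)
The strategy is to decouple the time dependence of the integrand from that of the measure via the identity
\begin{align*}
\frac{\indRad{V_{s+\delta}}(\phi_{s+\delta}) - \indRad{V_s}(\phi_s)}{\delta}
= \frac{\indRad{V_{s+\delta}}(\phi_s) - \indRad{V_s}(\phi_s)}{\delta}
+ \indRad{V_{s+\delta}}\!\!\left(\frac{\phi_{s+\delta} - \phi_s}{\delta}\right),
\end{align*}
valid for $\delta\neq 0$ with $s+\delta\in[t_1,t_2]$, and then to take $\limsup_{\delta\to 0}$ on both sides. The left-hand side yields $\utder\indRad{V_t}(\phi_t)|_{t=s}$, while the first summand on the right yields $\utder\indRad{V_t}(\phi_s)|_{t=s}$. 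The task therefore reduces to identifying the $\limsup$ of the second summand with $\limsup_{\delta\to 0}\indRad{V_{s+\delta}}(\tder\phi_t|_{t=s})$ and then invoking the subadditivity of $\limsup$.

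For the second summand, I would exploit that $\phi\in\dspace{1}([t_1,t_2],\cdspace{1}(U))$ provides both a compact set $K\subset\subset U$ containing $\spt\phi_t$ for every $t$ in a neighbourhood of $s$, and the uniform convergence
\begin{align*}
\sup_U\left|\frac{\phi_{s+\delta}(x)-\phi_s(x)}{\delta}-\tder\phi_t(x)|_{t=s}\right|\longrightarrow 0\quad\text{as }\delta\to 0.
\end{align*}
Setting $g:=\tder\phi_t|_{t=s}$ and $\eta_\delta:=(\phi_{s+\delta}-\phi_s)/\delta-g$, we thus have $\spt\eta_\delta\subset K$ and $\|\eta_\delta\|_{\dspace{0}(U)}\to 0$. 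Since (B\ref{originalBrakkeflow}) combined with Setting \ref{bounded_force_setting} places us in Setting \ref{main_setting}, Lemma \ref{uniformmeasurebound}(1) supplies a constant $M\in\real^+$ with $\sup_{t\in[t_1,t_2]}\indRad{V_t}(K)\leq M$, whence
\begin{align*}
|\indRad{V_{s+\delta}}(\eta_\delta)|\leq M\|\eta_\delta\|_{\dspace{0}(U)}\longrightarrow 0.
\end{align*}
Consequently $\limsup_{\delta\to 0}$ of the second summand coincides with $\limsup_{\delta\to 0}\indRad{V_{s+\delta}}(g)$, which is in fact finite, being bounded by $M\|g\|_{\dspace{0}(U)}$.

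Finiteness of this $\limsup$ ensures that $\limsup(A_\delta+B_\delta)\leq\limsup A_\delta+\limsup B_\delta$ applies without any $\infty-\infty$ ambiguity, and combining the previous identifications gives the claimed inequality. The main technical point to verify is the existence of the common compact support $K$ used in the uniform bound; this follows from the standard fact that a continuous (let alone $\dspace{1}$) map from a compact interval into the strict inductive limit space $\cdspace{1}(U)$ has image contained in $\cdspace{1}_{K}(U)$ for some $K\subset\subset U$, applied simultaneously to $\phi$ and to its time derivative $\tder\phi$.
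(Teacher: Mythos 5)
Your proof is correct and follows essentially the same route as the paper: the same add-and-subtract decomposition, the same use of Lemma \ref{uniformmeasurebound} to obtain a uniform measure bound $M$ on a common compact support $K$, and the same uniform-convergence argument showing the difference-quotient error term $\eta_\delta$ is $o(1)$ in $\dspace{0}(U)$, so that $|\indRad{V_{s+\delta}}(\eta_\delta)|\leq M\|\eta_\delta\|_{\dspace{0}}\to 0$. Your explicit remark about the finiteness of $\limsup_{\delta\to 0}\indRad{V_{s+\delta}}(\tder\phi_t|_{t=s})$ to justify the subadditivity of $\limsup$ is a minor clarification the paper leaves implicit, but the argument is the same.
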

%
% Brakke error
%
\begin{rem}
\label{Brakke_error}
Here we follow the proof of Brakke's time dependent test function result \cite[\S 3.5]{MR485012}.
Note that Brakke only considers limits from the right.
Also he uses (B\ref{originalBrakkeflow}) to estimate
$\limsup_{\delta\searrow 0}\indRad{V_{s+\delta}}(\tder\phi_t|_{t=s})
\leq\indRad{V_{s}}(\tder\phi_t|_{t=s})$
which is wrong unless $\tder\phi_t|_{t=s}$ is positive,
but this excludes most of the interesting test-functions
in particular his barrier functions.
\end{rem}

%
%proof
%
\begin{proof}
For $\delta\in [s-t_1,t_2-s]\setminus\{0\}$ set
\begin{align*}
\xi_s(\delta):=\delta^{-1}(\indRad{V_{s+\delta}}(\phi_{s+\delta})-\indRad{V_{s}}(\phi_{s})).
\end{align*}
Adding and subtracting $\delta^{-1}\indRad{V_{s+\delta}}(\phi_{s})$ and $\indRad{V_{s+\delta}}(\tder\phi_t|_{t=s})$
we can re-arrange terms to obtain
\begin{align}
\label{differencequotinet}
\begin{split}
\xi_s(\delta)
=&\delta^{-1}(\indRad{V_{s+\delta}}(\phi_{s})-\indRad{V_{s}}(\phi_s))
+\indRad{V_{s+\delta}}(\tder\phi_t|_{t=s})\\
&+\delta^{-1}\indRad{V_{s+\delta}}(\phi_{s+\delta}-\phi_{s}-\delta\tder\phi_t|_{t=s}).
\end{split}
\end{align}
Next consider $K:=\bigcup_{t\in[t_1,t_2]}\spt\phi_t$.
By continuity of $\phi$ we see that $K\subset\subset U$.
Then Lemma \ref{uniformmeasurebound} yields an $M\in\real^+$ such that
$\indRad{V_t}(K)\leq M$ for all $t\in [t_1,t_2]$.
This lets us estimate
\begin{align*}
\indRad{V_{s+\delta}}(\phi_{s+\delta}-\phi_{s}-\delta\tder\phi_t|_{t=s})
&=\int_{U}\int_{s}^{s+\delta}(\tder\phi_t|_{t=q}-\tder\phi_t|_{t=s})\ud q\ud\indRad{V_{s+\delta}}\\
&\leq\delta M\sup_{q\in [s-|\delta|,s+|\delta|]}\big|\tder\phi_t|_{t=q}-\tder\phi_t|_{t=s}\big|.
\end{align*}
Thus by the continuity of $\tder\phi_t$ we have
\begin{align*}
\limsup_{\delta\to 0}\delta^{-1}\indRad{V_{s+\delta}}(\phi_{s+\delta}-\phi_{s}-\delta\tder\phi_t|_{t=s})=0.
\end{align*}
Hence taking the $\limsup_{\delta\to 0}$ in \eqref{differencequotinet} establishes the result.
\end{proof}

%
%Proposition
%
\begin{prop}
\label{main_result_bounded}
In Setting \ref{bounded_force_setting} we have
(B\ref{originalBrakkeflow})
implies (B\ref{strongBrakkeflow}) 
\end{prop}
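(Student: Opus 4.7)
The plan is to fix $s\in[t_1,t_2]$ and $\phi\in\dspace{1}([t_1,t_2],\cdspace{2}(U,\real^+))$ and combine Lemma \ref{time_dep_test_fct_bnd_lem}, condition (B\ref{originalBrakkeflow}), and Proposition \ref{almostcontinuity}. Lemma \ref{time_dep_test_fct_bnd_lem} already produces the key decomposition
\begin{align*}
\utder\indRad{V_t}(\phi_t)|_{t=s}
\leq\utder\indRad{V_t}(\phi_s)|_{t=s}
+\limsup_{\delta\to 0}\indRad{V_{s+\delta}}(\tder\phi_t|_{t=s}),
\end{align*}
so the remaining task is to bound the two terms on the right by $\BVar(V_s,\phi_s,\force_s)$ and $\indRad{V_s}(\tder\phi_t|_{t=s})$, respectively.

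I would then split on the value of $A:=\utder\indRad{V_t}(\phi_s)|_{t=s}\in[-\infty,\infty)$. If $A=-\infty$, Lemma \ref{uniformmeasurebound} gives a uniform bound on $\indRad{V_t}(K)$ on $[t_1,t_2]$ for every compact $K\subset U$, so the limsup above is finite; consequently $\utder\indRad{V_t}(\phi_t)|_{t=s}=-\infty$, which trivially satisfies (B\ref{strongBrakkeflow}). If $A>-\infty$, I would apply Proposition \ref{almostcontinuity} with $\chi=\phi_s$ and $\psi=\tder\phi_t|_{t=s}$; this collapses the limsup into a genuine limit equal to $\indRad{V_s}(\tder\phi_t|_{t=s})$, after which the bound $A\leq\BVar(V_s,\phi_s,\force_s)$ supplied by (B\ref{originalBrakkeflow}) applied to the fixed test function $\phi_s$ closes the estimate.

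The hard part will be verifying the support hypothesis $\{|\tder\phi_t|_{t=s}|>0\}\subset\{\phi_s>0\}$ required by Proposition \ref{almostcontinuity}, precisely the issue at the heart of Brakke's original error (cf.\ Remark \ref{Brakke_error}). The observation to exploit is that the non-negativity of $\phi$ combined with $\dspace{1}$ regularity in $t$ forces $\tder\phi_t|_{t=s}$ to vanish wherever $\phi_s$ does: if $\phi_s(x_0)=0$ then $t\mapsto\phi_t(x_0)$ attains a minimum at $t=s$, and since this map is $\dspace{1}$ with non-negative values its derivative at $s$ must be zero. At the endpoints $s\in\{t_1,t_2\}$ only a one-sided sign is available for $\tder\phi_t|_{t=s}(x_0)$, but the matching one-sided interpretation of $\utder$ and $\tder$ at the endpoints keeps the above argument self-consistent without essential change, so this subtle sign check is really the only genuinely new ingredient beyond a direct appeal to Lemma \ref{time_dep_test_fct_bnd_lem} and Proposition \ref{almostcontinuity}.
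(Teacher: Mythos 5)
Your proposal is correct and follows essentially the same route as the paper's proof: split on whether $\utder\indRad{V_t}(\phi_s)|_{t=s}$ equals $-\infty$, dispatch the first case via Lemma \ref{time_dep_test_fct_bnd_lem} plus the uniform measure bound of Lemma \ref{uniformmeasurebound}, and in the second case apply Proposition \ref{almostcontinuity} with $\chi=\phi_s$ and $\psi=\tder\phi_t|_{t=s}$ followed by (B\ref{originalBrakkeflow}). Your explicit verification of $\{|\tder\phi_t|_{t=s}|>0\}\subset\{\phi_s>0\}$ (via non-negativity forcing an interior minimum in time wherever $\phi_s$ vanishes) is the one step the paper states without proof, so you have if anything filled in a small detail rather than diverged.
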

%
%proof
%
\begin{proof}
Suppose (B\ref{originalBrakkeflow}) holds.
Consider $s\in [t_1,t_2]$, $\epsilon>0$, and a testfunction 
$\phi\in\dspace{1}((s-\epsilon,s+\epsilon),\cdspace{1}(U,\real^+))$.
First consider the case $\utder\indRad{V_t}(\phi_s)|_{t=s}=-\infty$.
Then Lemma \ref{time_dep_test_fct_bnd_lem}, Lemma \ref{uniformmeasurebound} and continuity of $\tder\phi_t$
imply $\utder\indRad{V_t}(\phi_t)|_{t=s}=-\infty$, so the inequality in (B\ref{strongBrakkeflow}) holds.
Now suppose $\utder\indRad{V_t}(\phi_s)|_{t=s}>-\infty$.
As $\phi$ is positive and differentiable in time we see $\{|\tder\phi_t|_{t=s}|>0\}\subset\{\phi_s>0\}$.
Thus Lemma \ref{almostcontinuity} implies
\begin{align*}
\limsup_{\delta\to 0}\indRad{V_{s+\delta}}(\tder\phi_t|_{t=s})
=\indRad{V_{s}}(\tder\phi_t|_{t=s}).
\end{align*}
Combining this with Lemma \ref{time_dep_test_fct_bnd_lem} and using (B\ref{originalBrakkeflow})
yields the inequality in (B\ref{strongBrakkeflow}).
\end{proof}

%
%Barkke var continuity
%
\begin{lem}
\label{Brakke_var_continuity_lem}
Suppose $\BVar$ is upper-continuous on $\subVar$ (see Definition \ref{Variation_defs}),
$\force$ is in $\dspace{0}([t_1,t_2],\dspace{0}(U,\real^{\fulldim}))$
and $(V_t)_{t\in[t_1,t_2]}$ is a Brakke flow.
Let $\phi\in\cdspace{2}(U,\real^+)$
and $s\in [t_1,t_2]$ with $\utder\indRad{V_t}(\phi)|_{t=s}>-\infty$.
Then we have
\begin{align*}
B_s:=\lim_{\delta\to 0}\sup_{t\in (s-\delta,s+\delta)\cap[t_1,t_2]\setminus\badtimes_0}\BVar(V_t,\phi,\force_t)
\leq\BVar(V_s,\phi,\force_s).
\end{align*}
\end{lem}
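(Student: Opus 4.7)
The plan is to realize $B_s$ as the limit of $\BVar(V_{t_i},\phi,\force_{t_i})$ along a sequence $t_i\to s$ with $t_i\in [t_1,t_2]\setminus \badtimes_0$, and then apply the upper-continuity condition of Definition~\ref{Variation_defs}(\ref{upper_continuity}) with $\chi:=\phi$, $V_i:=V_{t_i}$, $V_0:=V_s$, $\aforce_i:=\force_{t_i}$, $\aforce_0:=\force_s$.

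If $B_s=-\infty$ the conclusion is automatic, so assume $B_s>-\infty$. Since $\delta\mapsto\sup_{t\in(s-\delta,s+\delta)\cap[t_1,t_2]\setminus\badtimes_0}\BVar(V_t,\phi,\force_t)$ is monotone in $\delta$, the definition of $B_s$ as this limit supplies a sequence $(t_i)_{i\in\nat}$ in $[t_1,t_2]\setminus\badtimes_0$ with $t_i\to s$ and $\BVar(V_{t_i},\phi,\force_{t_i})\to B_s$. Since $t_i\notin\badtimes_0$, we have $V_{t_i}\in\subVar$, so the upper-continuity hypothesis on $\subVar$ is applicable to this sequence.

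It remains to verify the three conditions \eqref{uniform_measure_bound}, \eqref{measure_convergence} and \eqref{force_convergence} with the choices above. For \eqref{uniform_measure_bound}, Lemma~\ref{uniformmeasurebound}(1) applied to $K:=\spt\phi$ bounds $\indRad{V_{t_i}}(\{\phi>0\})\leq\indRad{V_{t_i}}(\spt\phi)$ uniformly in $i\in\nat\cup\{0\}$. For \eqref{force_convergence}, the assumption $\force\in\dspace{0}([t_1,t_2],\dspace{0}(U,\real^{\fulldim}))$ yields $\force_{t_i}\to\force_s$ uniformly on compact subsets of $U$, and $\{\phi>0\}$ has compact closure in $U$ since $\spt\phi\subset\subset U$. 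For \eqref{measure_convergence}, fix $\psi\in\cdspace{0}(\{\phi>0\})$; then $\{|\psi|>0\}\subset\spt\psi\subset\{\phi>0\}$, so the hypothesis $\utder\indRad{V_t}(\phi)|_{t=s}>-\infty$ allows Proposition~\ref{almostcontinuity} (applied with the given $\chi:=\phi$) to deliver $\indRad{V_{t_i}}(\psi)\to\indRad{V_s}(\psi)$.

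With all hypotheses verified, upper-continuity of $\BVar$ on $\subVar$ yields $B_s=\lim_{i\to\infty}\BVar(V_{t_i},\phi,\force_{t_i})\leq\BVar(V_s,\phi,\force_s)$, which is the desired inequality. The main obstacle is condition \eqref{measure_convergence}: the one-sided estimates from Proposition~\ref{semicontinuity}(2) do not suffice on their own, and it is precisely the finiteness hypothesis $\utder\indRad{V_t}(\phi)|_{t=s}>-\infty$ that, via Proposition~\ref{almostcontinuity}, upgrades the one-sided semi-continuity of $t\mapsto\indRad{V_t}$ to full continuity at $t=s$ against all test functions supported in $\{\phi>0\}$.
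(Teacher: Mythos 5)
Your proof is correct and follows essentially the same approach as the paper: extract a sequence $(t_i)$ in $[t_1,t_2]\setminus\badtimes_0$ realizing $B_s$, then verify conditions \eqref{uniform_measure_bound}, \eqref{measure_convergence}, \eqref{force_convergence} via Lemma~\ref{uniformmeasurebound}, Proposition~\ref{almostcontinuity}, and continuity of $\force$, and apply Definition~\ref{Variation_defs}(\ref{upper_continuity}). Your write-up is in fact somewhat more explicit than the paper's (handling the $B_s=-\infty$ case and spelling out $\{|\psi|>0\}\subset\{\phi>0\}$), but the argument is the same.
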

%
%proof
%
\begin{proof}
We will use Definition \ref{Variation_defs}(\ref{upper_continuity}).
Take a sequence $(t_i)_{i\in\nat}$ in $[t_1,t_2]\setminus\badtimes_0$ with $t_i\to s$ and such that
$\BVar(V_i,\phi,\aforce_i)\to B_s$, where we set $V_i:=V_{t_i}\in\subVar$, and $\aforce_i:=\force_{t_i}$.
Also set $\chi:=\phi$, $V_0:=V_s$ and $\aforce_0:=\force_s$.
With Lemma \ref{uniformmeasurebound} we can find $M\in\real^+$
such that \eqref{uniform_measure_bound} holds.
Proposition \ref{almostcontinuity} yields \eqref{measure_convergence}.
Also the continuity of $\force$ implies \eqref{force_convergence}.
Then
\begin{align*}
B_s
=\lim_{i\to\infty}\BVar(V_i,\phi,\aforce_i)
=\limsup_{i\to\infty}\BVar(V_i,\chi,\aforce_i)
\leq\BVar(V_0,\chi,\aforce_0)
=\BVar(V_s,\phi,\force_s).
\end{align*}
\end{proof}
%
%
%measurabilty
%
%
\begin{lem}
\label{measurability}
Suppose $\BVar$ is upper-continuous on $\subVar$ (see Definition \ref{Variation_defs}),
$\force$ is in $\dspace{0}([t_1,t_2],\dspace{0}(U,\real^{\fulldim}))$
and (B\ref{originalBrakkeflow}) holds.
Consider $\phi\in\dspace{1}([t_1,t_2],\cdspace{2}(U,\real^+))$.
Then $s\to\BVar(V_s,\phi_s,\force_s)+\indRad{V_s}(\tder\phi_t|_{t=s})$
is measurable on $[t_1,t_2]$.
\end{lem}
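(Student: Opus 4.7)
The plan is to split $F(s) := \BVar(V_s,\phi_s,\force_s) + \indRad{V_s}(\tder\phi_t|_{t=s})$ into $H(s)+G(s)$, with $H(s):=\BVar(V_s,\phi_s,\force_s)$ and $G(s):=\indRad{V_s}(\tder\phi_t|_{t=s})$, and prove measurability of each summand.

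For $G$: by Proposition~\ref{semicontinuity}(2), for each fixed $\psi\in\cdspace{0}(U)$ the map $s\mapsto\indRad{V_s}(\psi)$ is the difference of a right-lower- and a left-upper-semicontinuous function, hence Borel measurable. The map $s\mapsto\tder\phi_t|_{t=s}$ is uniformly continuous from $[t_1,t_2]$ into $\cdspace{0}(U)$ with values supported in a common compact $K\subset\subset U$, so it can be approximated in sup-norm by piecewise-constant-in-$s$ step maps $\eta^n$. On each partition interval, $s\mapsto\indRad{V_s}(\eta^n_s)$ has the fixed-argument form $s\mapsto\indRad{V_s}(\psi_j)$ and is thus measurable, and the uniform bound $\sup_s\indRad{V_s}(K)<\infty$ from Lemma~\ref{uniformmeasurebound}(1) permits passage to the uniform limit $\indRad{V_s}(\eta^n_s)\to G(s)$, yielding measurability of $G$.

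For $H$, I first treat the time-independent case: for fixed $\psi\in\cdspace{2}(U,\real^+)$ and $\alpha\in\dspace{0}(U,\real^{\fulldim})$ the map $s\mapsto\BVar(V_s,\psi,\alpha)$ is Lebesgue measurable. Indeed, the proof of Lemma~\ref{Brakke_var_continuity_lem} carries over with $\force_t$ replaced by the constant $\alpha$ (the continuity hypothesis on $\force$ used there is trivial for a constant) and yields upper semi-continuity of $s\mapsto\BVar(V_s,\psi,\alpha)$ at every $s$ with $\utder\indRad{V_t}(\psi)|_{t=s}>-\infty$. By Proposition~\ref{semicontinuity}(1) the upper derivative is uniformly bounded above, so $t\mapsto\indRad{V_t}(\psi)-Lt$ is non-increasing (Lemma~\ref{leftmonotonicitylem}) and hence differentiable a.e.\ in $t$; at points of differentiability the upper derivative is finite. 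Upper semi-continuity on a full-measure set therefore gives Lebesgue measurability. Next, I approximate $\phi_s$ and $\force_s$ by piecewise-constant-in-$s$ data $\phi^n_s,\force^n_s$ on partitions of mesh $(t_2-t_1)/n$; uniform continuity of $s\mapsto\phi_s$ in $\dspace{2}$ and of $s\mapsto\force_s$ in $\dspace{0}$ yields uniform convergence. Each $H_n(s):=\BVar(V_s,\phi^n_s,\force^n_s)$ is measurable, being piecewise of the fixed-argument form just handled.

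The main obstacle is to deduce measurability of $H$ from that of the $H_n$, since the upper-continuity axioms~(2a) and~(2b) of Definition~\ref{Variation_defs} fix different arguments. I plan to split $H_n(s)-H(s)$ into one piece comparing $\force^n_s$ with $\force_s$ at the fixed test function $\phi^n_s$ and another piece comparing $\phi^n_s$ with $\phi_s$ at the fixed forcing $\force_s$. On the full-measure set of $s$ for which $V_s\in\subVar$ carries an $\Lp{2}$ mean curvature on a neighborhood of $\bigcup_{t\in[t_1,t_2]}\spt\phi_t$—ensured by (B\ref{originalBrakkeflow}), Proposition~\ref{semicontinuity}(1), and Lemma~\ref{Brakkevariationbound}—strong continuity~(2b) handles the second piece, while upper continuity~(2a) together with the quantitative bound from Lemma~\ref{Brakkevariationbound} handles the first. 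A diagonal argument then produces pointwise a.e.\ convergence $H_n\to H$, giving Lebesgue measurability of $H$ and completing the proof.
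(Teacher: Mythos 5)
Your overall architecture matches the paper: split the integrand into $\BVar$ and measure pieces, approximate the time-dependent arguments by piecewise-constant-in-time data, prove measurability of each approximant, and pass to the limit on an a.e.\ set where $V_s$ has an $\Lp{2}$ mean curvature. Your treatment of $G(s)=\indRad{V_s}(\tder\phi_t|_{t=s})$ is correct and is what the paper does, and your observation that the argument of Lemma~\ref{Brakke_var_continuity_lem} carries over when $\force_t$ is replaced by a constant is also valid.

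However, there is a genuine gap in the step $H_n\to H$. You discretize \emph{both} $\phi_s$ and $\force_s$, and then need
\begin{equation*}
\BVar(V_s,\phi^n_s,\force^n_s)-\BVar(V_s,\phi^n_s,\force_s)\longrightarrow 0
\end{equation*}
for a.e.\ $s$. The axioms of Definition~\ref{Variation_defs} do not supply this. Axiom~(2a) only gives an \emph{upper semi-continuity} statement, and it requires the test function to be \emph{fixed}, whereas here $\phi^n_s$ varies with $n$; Lemma~\ref{Brakkevariationbound} gives only one-sided (upper) bounds on $\BVar$, so it cannot control the difference. For the specific $\TBVar$ one could exploit the explicit formula, but the lemma is stated for an arbitrary upper-continuous $\BVar$, for which no continuity in the force argument is available. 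The ``diagonal argument'' you invoke does not rescue this, since the problematic term is not a tail that becomes small in any sense you can certify.

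The fix is to not discretize $\force$ at all: this is what the paper does. Set $\tilde H_n(s):=\BVar(V_s,\phi^n_s,\force_s)$. On each partition interval $\tilde H_n(s)=\BVar(V_s,\psi,\force_s)$ with $\psi$ fixed, and Lemma~\ref{Brakke_var_continuity_lem} applied verbatim (with the original continuous $\force$) shows this is upper semi-continuous a.e., hence measurable. Then for a.e.\ $s$ one has $V_s\in\subVar$ with $\Lp{2}$ mean curvature, and strong continuity (2b) alone gives $\tilde H_n(s)\to H(s)$. Your detour through the constant-force version of Lemma~\ref{Brakke_var_continuity_lem} is unnecessary and is precisely what forces you into discretizing $\force$ and into the irreparable first piece.
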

\begin{proof}
For $i\in\nat$ set $\tau_i:=(t_2-t_1)/i$
and $a_j:=t_1+j\tau_i$, $j=0,\ldots i-1$.
Define $\psi^i,\phi^i\in\dspace{1}([t_1,t_2],\cdspace{2}(U,\real^+))$
by $\psi^i_s=\tder\phi_t|_{t=a_j}$ and $\phi^i_s=\phi_{a_j}$ for $s\in [a_j,a_{j+1})$.
Then for arbitrary $s\in [t_1,t_2]$ we can calculate 
\begin{align}
\label{approx_conv}
\lim_{i\to\infty}\|\psi^i_s-\tder\phi_t|_{t=s}\|_{\dspace{0}(U)}=0
\quad\text{and}\quad
\lim_{i\to\infty}\|\phi^i_s-\phi_s\|_{\dspace{2}(U)}=0.
\end{align}
Proposition \ref{semicontinuity} and Lemma \ref{Brakke_var_continuity_lem}
yield $s\to\indRad{V_s}(\psi^i_s)$ 
and $s\to\BVar(V_s,\phi_s^i,\force_s)$ are measurable.
Hence $s\to\limsup_{i\to\infty}(\BVar(V_s,\phi_s^i,\force_s)+\indRad{V_s}(\psi^i_s))$
is measurable.
Convegrence \eqref{approx_conv} yields $\lim_{i\to\infty}\indRad{V_s}(\psi^i_s)=\indRad{V_s}(\psi_s)$,
thus it suffices to show
\begin{align}
\label{Brakkevar_conv}
\lim_{i\to\infty}\BVar(V_s,\phi_s^i,\force_s)
=\BVar(V_s,\phi_s,\force_s)
\end{align}
for $\Lp{1}$-a.e.\ $s\in[t_1,t_2]$.

Consider a function $\chi\in\cdspace{2}(U,[0,1])$ with $\spt\phi_s\subset\{\chi=1\}$ for all $s\in[t_1,t_2]$.
By Proposition \ref{semicontinuity} and Lemma \ref{fundamentalinequality}
there exists a set $\badtimes_1\subset [t_1,t_2]$
such that
$\utder\indRad{V_t}(\chi)|_{t=s}>-\infty$ 
for all $s\in[t_1,t_2]\setminus\badtimes_1$
and $\LM{1}(\badtimes_1)=0$.
As (B\ref{originalBrakkeflow}) holds this yields
$\int_{U}\chi|\genMCV_s|^2\indRad{V_s}<\infty$
for all $s\in[t_1,t_2]\setminus\badtimes_1$.
In view of Definition \ref{Variation_defs}(\ref{strong_continuity}) the convergence \eqref{approx_conv} 
yields \eqref{Brakkevar_conv}, which completes the statement.
\end{proof}
This completes the ingredients for the proof of the \hyperlink{main_result}{Main Result}.
%
%proof
%
\begin{proof}[proof of the \hyperlink{main_result}{Main Result}]
(B\ref{originalBrakkeflow}) implies (B\ref{strongBrakkeflow})
by Proposition \ref{main_result_bounded}.
\\
(B\ref{strongBrakkeflow}) implies (B\ref{Tonegawaflow_variant})
follows from Lemma \ref{fundamentalinequality} and Lemma \ref{measurability}.
\\
(B\ref{Tonegawaflow_variant}) implies (B\ref{Tonegawaflow})
by Corollary \ref{implications_of_weak_definitions}.
\\
(B\ref{Tonegawaflow}) implies (B\ref{weakBrakkeflow}) is clear.
\\
(B\ref{weakBrakkeflow}) implies (B\ref{originalBrakkeflow}) 
follows from Lemma \ref{Brakke_var_continuity_lem} and Remark \ref{upper_origianl_BF}.
\end{proof}
\begin{cor}
\label{equal_variations}
Consider the case $\subVar=\IVar_{\vdim}(U)$
and $\force\in\dspace{0}([t_1,t_2],\dspace{0}(U,\real^{\fulldim}))$.
Recall Definition \ref{Tonegawas_variation} and suppose 
$\BVar(V,\phi,\aforce)=\TBVar(V,\phi,\aforce)$ whenever $V\in\IVar_{\vdim}(U)$.
Let $(V_t)_{t\in[t_1,t_2]}$ be a Brakke flow.
Then
\renewcommand{\labelenumi}{(\arabic{enumi})}
\begin{enumerate}
\item 
All the characterisations (B1)-(B5) hold.
\item
For $s\in [t_1,t_2]$ such that $V_s\in\Var_{\vdim}(U)\setminus\IVar_{\vdim}(U)$
we have
\begin{align*}
\utder\indRad{V_t}(\phi)|_{t=s}=-\infty
\quad\text{for all }\phi\in\cdspace{2}(U,\real^+).
\end{align*}
\end{enumerate}
\end{cor}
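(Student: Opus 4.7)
For part (1), the plan is to reduce directly to the Main Result. Under the assumption $\BVar = \TBVar$ on $\IVar_{\vdim}(U) = \subVar$, Corollary \ref{changed_variation_cor} (invoked with $\TBVar^{*} := \BVar$) yields that $\BVar$ is upper-continuous on $\subVar$. Since $\force \in \dspace{0}([t_{1},t_{2}], \dspace{0}(U,\real^{\fulldim}))$, all hypotheses of the Main Result are in place, so the characterisations (B1)--(B5) are equivalent. Because $(V_{t})$ is a Brakke flow by assumption, at least one of them holds, hence all five do, establishing (1).

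For part (2), I plan to argue by contradiction. Suppose $V_{s} \notin \IVar_{\vdim}(U)$ but $\utder\indRad{V_{t}}(\phi)|_{t=s} > -\infty$ for some $\phi \in \cdspace{2}(U, \real^{+})$. Since (B\ref{originalBrakkeflow}) holds by (1), we have $\BVar(V_{s}, \phi, \force_{s}) \geq \utder\indRad{V_{t}}(\phi)|_{t=s} > -\infty$, and by Definition \ref{Variation_defs}(\ref{Brakke_variation}) the varifold $V_{s}$ must admit a generalised mean curvature $\genMCV_{s} \in \Lp{2}((U, \indRad{V_{s}} \mres \{\phi > 0\}), \real^{\fulldim})$, so the first variation of $V_{s}$ is locally bounded on $\{\phi > 0\}$. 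Next, applying (B\ref{Tonegawaflow}) on intervals $[s, s+\delta_{i}]$ with $\delta_{i} \searrow 0$ selected so that the difference quotient of $\indRad{V_{t}}(\phi)$ stays bounded below by the finite $\utder$, and using the upper bound of Lemma \ref{Brakkevariationbound}, a Chebyshev-type argument yields times $t_{i} \to s$ with $t_{i} \in [t_{1},t_{2}] \setminus \badtimes_{0}$ (whence $V_{t_{i}} \in \IVar_{\vdim}(U)$) satisfying a uniform bound $\int \phi \,|\genMCV_{t_{i}}|^{2} \ud\indRad{V_{t_{i}}} \leq C$.

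Finally, Allard's compactness theorem \cite[Thm.\ 6.4]{MR0307015} provides a sub-subsequence of $V_{t_{i}} \mres \{\phi > 0\}$ converging as varifolds to some $V^{*} \in \IVar_{\vdim}(\{\phi > 0\})$. Since $\utder\indRad{V_{t}}(\phi)|_{t=s} > -\infty$, Proposition \ref{almostcontinuity} applies and forces $\indRad{V_{t_{i}}}(\psi) \to \indRad{V_{s}}(\psi)$ for all $\psi \in \cdspace{0}(\{\phi > 0\})$, so $\indRad{V^{*}} = \indRad{V_{s}}$ on $\{\phi > 0\}$; in particular the density of $\indRad{V_{s}}$ is integer valued almost everywhere there. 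Combining this with the local boundedness of the first variation and invoking Allard's rectifiability theorem then yields $V_{s} \mres \{\phi > 0\} \in \IVar_{\vdim}(U)$. The main obstacle is converting this local rectifiability into the stated global contradiction: since $V_{s}$ itself is not in $\IVar_{\vdim}(U)$, some open region in $U$ carries a non-rectifiable part, and one must choose $\phi$ whose support contains this region so that the local conclusion $V_{s} \mres \{\phi > 0\} \in \IVar_{\vdim}(U)$ directly contradicts the assumption $V_{s} \notin \IVar_{\vdim}(U)$.
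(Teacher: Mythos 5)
Your argument for Part (1) is correct and matches the paper's: apply Corollary \ref{changed_variation_cor} to get upper-continuity of $\BVar$ on $\subVar=\IVar_{\vdim}(U)$, then invoke the Main Result.

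For Part (2), you take a substantially heavier compactness route, and the gap you flag at the end cannot be patched in the form you propose. Your chain shows only that, for a \emph{given} $\phi$ with $\utder\indRad{V_t}(\phi)|_{t=s}>-\infty$, the restriction $V_s\mres\{\phi>0\}$ lies in $\IVar_{\vdim}(U)$. Since the claim asserts $\utder\indRad{V_t}(\phi)|_{t=s}=-\infty$ for \emph{all} $\phi$, you are not free to pick one $\phi$ whose support swallows the non-rectifiable region; you would have to rule out $\utder>-\infty$ also for $\phi$ whose support is disjoint from it, and nothing in your argument does that. Moreover the non-rectifiable part of $V_s$ need not be precompact in $U$, so no single admissible $\phi\in\cdspace{2}(U,\real^+)$ need cover it even in principle.

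The paper's proof of (2) works entirely at the level of variations and avoids compactness. The key step you are missing is a change of variation: by Part (1), (B\ref{weakBrakkeflow}) holds with $\BVar$; since $V_t\in\IVar_{\vdim}(U)$ for a.e.\ $t$, where $\BVar=\TBVar$ by hypothesis, (B\ref{weakBrakkeflow}) also holds with $\BVar$ replaced by $\TBVar$ (the two integrands agree up to a Lebesgue-null set of times, so integrability and the integral are unaffected). Proposition \ref{proper_Brakke_variation} makes $\TBVar$ upper-continuous on all of $\Var_{\vdim}(U)$, so the Main Result applies a \emph{second} time with $\TBVar$ in place of $\BVar$ and yields (B\ref{originalBrakkeflow}) for $\TBVar$: $\utder\indRad{V_t}(\phi)|_{t=s}\leq\TBVar(V_s,\phi,\force_s)$. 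Part (2) is then read off directly from Definition \ref{Tonegawas_variation}, since $\TBVar(V_s,\phi,\force_s)=-\infty$ once the relevant restriction of $V_s$ fails to be integer rectifiable. No Allard compactness, no rectifiability theorem, and no Chebyshev selection is needed; this substitution of $\BVar$ by $\TBVar$ is the idea your proof lacks.
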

\begin{proof}
Corollary \ref{changed_variation_cor} yields that $\BVar$ is upper-continuous on $\IVar_{\vdim}(U)$. 
Thus the \hyperlink{main_result}{Main Result} establishes (1).
In particular (B\ref{weakBrakkeflow}) holds.
Then also (B\ref{weakBrakkeflow}) with $\BVar$ replaced by $\TBVar$ holds.
In view of Proposition \ref{proper_Brakke_variation}
we can apply the \hyperlink{main_result}{Main Result} with $\BVar$ replaced by $\TBVar$.
Thus (B\ref{originalBrakkeflow}) holds with $\BVar$ replaced by $\TBVar$.
The definition of $\TBVar$ then establishes (2).
\end{proof}

\begin{appendix}
\section{Appendix}
\label{appendix}
Here we show two very fundamental lemmas regarding the growth of functions with bounded upper derivative.
Also we prove an $\Lp{2}$-approximation Lemma for $\Lp{1}$-functions.

\begin{lem}
\label{leftmonotonicitylem}
Let $a,L\in\real$, $b\in (a,\infty)$
and $f:[a,b]\to\mathbb{R}$.
Suppose
\begin{align*}
\lefttder f(t)|_{t=s}\leq L
\quad\text{for all } s\in (a,b],
\\%\quad\text{and}\quad
\limsup_{h\searrow 0}f(t+h)\leq f(t)
\quad\text{for all } t\in [a,b).
\end{align*}
Then $f(b)- f(a)\leq L(b-a)$.
\end{lem}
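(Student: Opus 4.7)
\emph{Approach.} The plan is to argue by contradiction after passing to an auxiliary function carrying a strict negative margin. For each $\epsilon>0$ set
$g_\epsilon(t):=f(t)-f(a)-(L+\epsilon)(t-a)$.
Then $g_\epsilon(a)=0$, and the hypotheses translate to $\lefttder g_\epsilon(t)|_{t=s}\leq L-(L+\epsilon)=-\epsilon$ for $s\in(a,b]$, together with $\limsup_{h\searrow 0}g_\epsilon(t+h)\leq g_\epsilon(t)$ for $t\in[a,b)$. It suffices to show $g_\epsilon(b)\leq 0$ for every $\epsilon>0$, since letting $\epsilon\searrow 0$ yields $f(b)-f(a)\leq L(b-a)$.

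\emph{The contradiction.} Suppose $\alpha:=g_\epsilon(b)>0$, and let
$c:=\inf\{t\in[a,b]:g_\epsilon(t)\geq\alpha/2\}$.
Since $b$ lies in this set while $a$ does not (as $g_\epsilon(a)=0<\alpha/2$), we have $c\in[a,b]$. I would split into two cases according to whether the infimum is attained. If it is, then $g_\epsilon(c)\geq\alpha/2>0=g_\epsilon(a)$, so in particular $c>a$; applying the definition of $\lefttder g_\epsilon(c)\leq -\epsilon$ with, say, $\eta=\epsilon/2$ produces a $\delta>0$ such that $g_\epsilon(t)>g_\epsilon(c)+(\epsilon/2)(c-t)>g_\epsilon(c)\geq\alpha/2$ for every $t\in(c-\delta,c)$, so points of the sublevel-threshold set lie strictly below $c$, contradicting $c=\inf$. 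If the infimum is not attained, then $g_\epsilon(c)<\alpha/2$ and necessarily $c<b$, and by the infimum property there exists a sequence $t_n\searrow c$ with $g_\epsilon(t_n)\geq\alpha/2$; hence $\alpha/2\leq\limsup_{h\searrow 0}g_\epsilon(c+h)\leq g_\epsilon(c)<\alpha/2$, a contradiction with the right-upper-semicontinuity hypothesis at~$c$. Either alternative being impossible, $g_\epsilon(b)\leq 0$.

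\emph{Main obstacle.} The delicate point is the borderline case $g_\epsilon(c)=\alpha/2$ in the ``attained'' branch: a merely non-strict left-derivative bound $\lefttder g_\epsilon(c)\leq 0$ would not suffice to push $g_\epsilon$ strictly above $\alpha/2$ on a left-neighbourhood, and the case analysis would collapse. Introducing the parameter $\epsilon$ through $g_\epsilon$ upgrades the left-upper-derivative from $\leq L$ to the strictly negative bound $\leq -\epsilon$, which is precisely what is needed to obtain the strict inequality $g_\epsilon(t)>g_\epsilon(c)$ on $(c-\delta,c)$. Once this margin is in place the two hypotheses close off the two sides of $c$ symmetrically---the left-derivative bound rules out attainment, the right-limsup bound rules out right-approach---and the contradiction is forced. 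The passage $\epsilon\searrow 0$ at the end is then a routine limiting step.
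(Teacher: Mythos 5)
Your proof is correct and rests on the same central device as the paper's: perturb by an $\epsilon$-margin so that the non-strict left-derivative bound becomes strict, then run a connectedness argument on $[a,b]$ in which the strict left-derivative bound closes off the left side of a potential threshold point and the right upper semi-continuity hypothesis closes off its right side. The paper packages this directly (after normalizing $a=0$, $f(a)=0$, $L=0$, it shows the set $I_\epsilon$ on which $f(t)<\epsilon(1+t)$ is open, closed, and nonempty, hence all of $[0,b]$), whereas you package it by contradiction via a first-crossing point $c$ and a case split on whether the infimum is attained; these are interchangeable implementations of the same mechanism.
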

%
%proof
%
\begin{proof}
First consider the case $a=0$, $f(0)=0$ and $L=0$.
For $\epsilon\in (0,1)$ define
\begin{align*}
I_{\epsilon}&:=\{s\in [0,b]:
f(t) <  \epsilon (1+t)\quad\text{for all } t\in [0,s]\}.
\end{align*}
We directly see that $I_{\epsilon}\neq\emptyset$ 
as $0\in I_{\epsilon}$.

Consider some $s\in I_{\epsilon}$, $s<b$.
There exists an $\epsilon_s\in (0,1)$ such that we have
$f(s)+\epsilon_s < \epsilon (1+s)$.
Also there exists a $\tau_s\in (0,b-s)$ such that
for all $t\in (s,s+\tau_s)$ we have
$f(t)\leq \limsup_{\delta\searrow 0}f(s+\delta)+\epsilon_s$.
Hence by the upper continuity from the right we can estimate
\begin{align*}
f(t)
&\leq \limsup_{\delta\searrow 0}f(s+\delta)+\epsilon_s
\leq f(s)+\epsilon_s
<\epsilon (1+s)<\epsilon (1+t)
\end{align*}
for all $t\in (s,s+\tau_s)$.
So $[0,s+\tau_s)\subset I_{\epsilon}$.

Now consider a sequence $(s_m)_{m\in\mathbb{N}}$
with $s_m\in I_{\epsilon}$, $s_{m+1}> s_m$ for all $m\in\mathbb{N}$
and $\lim_{m\to\infty}s_m=s_0$ for some $s_0\in [0,b]$.
%For every $t\in [0,s_0)$ there exists an $m\in\mathbb{N}$ such that $t\in [0,s_m]$, thus $f(t) < \epsilon (1+t)$.
As $\lefttder f(t)|_{t=s_0}\leq 0$ there exists an $m\in\mathbb{N}$
such that $(s_0-s_m)^{-1}(f(s_0)-f(s_m))\leq \epsilon$.
Hence we can estimate
\begin{align*}
f(s_0)
\leq f(s_m)+\epsilon (s_0-s_m)
<\epsilon(1+s_m)+\epsilon (s_0-s_m)
=\epsilon(1+s_0).
\end{align*}
This yields $s_0\in I_{\epsilon}$.

So we showed $I_{\epsilon}$ is open, closed and non-empty in $[0,b]$, thus $I_{\epsilon}=[0,b]$.
In particular $b\in I_{\epsilon}$ for all $\epsilon\in (0,1)$.
Letting $\epsilon\searrow 0$ then establishes the result.
For general $a,D,f(a)\in\real$ consider $h(t):=f(t+a)-f(a)-Lt$ for $t\in [0,b-a]$.
Applying the established case yields
$h(b-a)\leq 0$ and we conclude the result.
\end{proof}
\begin{lem}
\label{fundamentalinequality}
Let $a,L\in\real$, $b\in (a,\infty)$
and $f:[a,b]\to\mathbb{R}$.
Suppose
\begin{align*}
\utder f(t)|_{t=s}\leq L
\quad\text{for all } s\in [a,b].
\end{align*}
Then we have $s\to\utder f(t)|_{t=s}$ is integrable and
\begin{align*}
f(b)-f(a)\leq\int_{a}^{b}\utder f(t)|_{t=s}\ud s.
\end{align*}
\end{lem}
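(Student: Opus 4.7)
My plan is to reduce to the case $L=0$ and then exploit that the hypothesis forces $f$ to be monotone, after which the claim follows from classical Lebesgue theory.

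First, set $\tilde f(s):=f(s)-Ls$, so that $\utder \tilde f(t)|_{t=s}=\utder f(t)|_{t=s}-L\leq 0$ for all $s\in[a,b]$. Since integrability of $\utder \tilde f(t)|_{t=s}$ is equivalent to that of $\utder f(t)|_{t=s}$, and since $f(b)-f(a)-L(b-a)=\tilde f(b)-\tilde f(a)$, it suffices to prove both assertions for $\tilde f$ under the normalised hypothesis $L=0$. From $\utder \tilde f\leq 0$ one obtains both $\lefttder \tilde f(t)|_{t=s}\leq 0$ everywhere and $\limsup_{h\searrow 0}\tilde f(s+h)\leq\tilde f(s)$ everywhere (the latter by multiplying the difference-quotient bound by the positive~$h$). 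Both conditions are inherited by every sub-interval $[\alpha,\beta]\subset[a,b]$, so Lemma \ref{leftmonotonicitylem} applied to $\tilde f|_{[\alpha,\beta]}$ with $L=0$ yields $\tilde f(\beta)\leq\tilde f(\alpha)$. Hence $\tilde f$ is non-increasing on $[a,b]$.

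Next I invoke the classical Lebesgue theorem on differentiation of monotone functions: the derivative $\tilde f'(s)$ exists and is finite for $\LM{1}$-a.e.\ $s\in[a,b]$, the map $s\mapsto\tilde f'(s)$ is Lebesgue measurable with $\tilde f'\leq 0$, and $\int_a^b \tilde f'(s)\,ds\geq\tilde f(b)-\tilde f(a)$. Wherever $\tilde f'(s)$ exists, the upper derivative $\utder \tilde f(t)|_{t=s}$ coincides with $\tilde f'(s)$; on the $\LM{1}$-null exceptional set, $s\mapsto\utder \tilde f(t)|_{t=s}$ is still bounded above by $0$. Hence $s\mapsto\utder \tilde f(t)|_{t=s}$ agrees $\LM{1}$-a.e.\ with $\tilde f'$, is therefore Lebesgue measurable and (being bounded above by $0$ with $\int_a^b\tilde f'\geq\tilde f(b)-\tilde f(a)>-\infty$) integrable, with integral at least $\tilde f(b)-\tilde f(a)$. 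Undoing the reduction gives the stated bound on $\int_a^b \utder f(t)|_{t=s}\,ds$.

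The sole non-elementary ingredient is the classical Lebesgue differentiation theorem for monotone functions (standard proof via the Vitali covering lemma); the rest is a translation together with a routine application of Lemma \ref{leftmonotonicitylem} to each sub-interval, so the main work is to notice that the bound on $\utder f$ turns $f$ into a monotone function on which the classical machinery directly applies.
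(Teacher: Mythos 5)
Your proof is correct, but it takes a genuinely different route from the paper's. Both start the same way: reduce to $L=0$ via $\tilde f(s):=f(s)-Ls$ and then use Lemma \ref{leftmonotonicitylem} on subintervals to conclude that $\tilde f$ is non-increasing. After that, you invoke Lebesgue's differentiation theorem for monotone functions as a black box, which simultaneously gives a.e.\ existence of $\tilde f'$, measurability, and the integral inequality $\int_a^b\tilde f'\ge\tilde f(b)-\tilde f(a)$; you then identify $\utder\tilde f$ with $\tilde f'$ on the a.e.\ set where the two-sided derivative exists and handle the null exceptional set via the pointwise bound $\utder\tilde f\le 0$. The paper instead avoids the differentiation theorem entirely: it cites Federer for measurability of $g(s):=\utder f(t)|_{t=s}$, forms discrete difference quotients $g_m(t):=m\bigl(f(t)-f(t-m^{-1})\bigr)\le 0$, shows $\int_\epsilon^b g_m\ge f(b)$ directly from monotonicity and $f(0)=0$, and applies the reverse Fatou lemma together with $\limsup_m g_m(s)\le\lefttder f(t)|_{t=s}\le g(s)$ before sending $\epsilon\searrow 0$. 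Your argument is shorter to state because it outsources the hard covering-lemma work to the classical theorem, while the paper's version is more self-contained (essentially reproving the relevant half of Lebesgue's theorem inline via Fatou) and does not require a.e.\ differentiability, only the $\limsup$ difference quotients. Both are valid; the paper's is preferable if one wants to minimise appeal to external results, yours if one is happy to cite standard real analysis.
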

%
%proof
%
\begin{proof}
First consider the case $a=0$, $f(0)=0$ and $L=0$.
Then Lemma \ref{leftmonotonicitylem} yields 
that $f$ is non-increasing, thus measurable.
Set $g(s):=\utder f(t)|_{t=s}$
then $g$ is measurable and non-positive,
hence integrable. (See \cite[\S 2.3.2]{MR0257325})

For $m\in\nat$ define
\begin{align*}
g_m(t):=m\left(f(t)-f(t-m^{-1})\right).
\end{align*}
For arbitrary $\epsilon> 0$ consider $m>\epsilon^{-1}$.
Then the monotonicity of $f$ and $f(0)=0$ yield
\begin{align*}
\int_{\epsilon}^{b}g_m(t)\ud t
&=m\int_{b-m^{-1}}^{b}f(t)\ud t 
- m\int_{\epsilon-m^{-1}}^{\epsilon}f(t)\ud t
\geq f(b).
%\geq f(b-m^{-1}).
\end{align*}
As the $g_m$ are non-positive
we can use Fatou's Lemma to obtain
\begin{align*}
%\limsup_{m\to \infty}f(b-m^{-1})
f(b)
\leq
\limsup_{m\to \infty}\int_{\epsilon}^{b}g_m(s)\ud s
\leq
\int_{\epsilon}^{b}\left(\limsup_{m\to \infty}g_m(s)\right)\ud s
\end{align*}
for every $\epsilon>0$.
By definition of $g$ and $g_m$ we clearly have
$\limsup g_m(s)\leq g(s)$ for all $s\in [a,b]$.
%Also by $\utder f(t)|_{t=b}<\infty$ we have that $f(b)\leq\limsup f(b-m^{-1})$.
So we conclude
$f(b)\leq\int_{\epsilon}^{b}g(s)\ud s$
for every $\epsilon>0$,
and letting $\epsilon\searrow 0$
we can use the monotone convergence theorem to obtain
the result in the special case.

For general $a,D,f(a)\in\real$ consider $h(t):=f(t+a)-f(a)-Lt$ for $t\in [0,b-a]$.
Applying the established case yields
$h(b-a)\leq\int_{0}^{b-a}\utder h(t)|_{t=s}\ud s$ and we conclude the result.
\end{proof}

%
%L^2 norm approximation Lemma
%
\begin{lem}
\label{L2_approx_lem}
Consider a Radon measure $\mu$ on some open set $W\subset\real^{\fulldim}$.
Let $H\in\Lp{1}((W,\mu),\real^{\fulldim})$, suppose $\mu(W)<\infty$
and set  
\begin{align*}
S:=\sup\left\{\int_{W}H\cdot X\,\ud\mu,
\; X\in\dspace{\infty}(W,\real^{\fulldim}),
\,\|X\|_{\Lp{2}((W,\mu),\real^{\fulldim})}\leq 1\right\}.
\end{align*}
Then we have $\|H\|_{\Lp{2}((W,\mu),\real^{\fulldim})}=S$.
Note that this may state $\infty=\infty$.
\end{lem}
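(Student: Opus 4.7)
The plan is to prove the two inequalities $S \leq \|H\|_{\Lp{2}(\mu)}$ and $S \geq \|H\|_{\Lp{2}(\mu)}$ separately. The first is immediate from the Cauchy--Schwarz inequality: for every admissible $X$ one has
\begin{align*}
\int_W H \cdot X \ud\mu \leq \|H\|_{\Lp{2}(\mu)} \|X\|_{\Lp{2}(\mu)} \leq \|H\|_{\Lp{2}(\mu)},
\end{align*}
so $S \leq \|H\|_{\Lp{2}(\mu)}$, with both sides possibly equal to $\infty$.

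For the reverse inequality the natural optimizer would be $H/\|H\|_{\Lp{2}(\mu)}$, but this may fail to be square integrable (when $\|H\|_{\Lp{2}(\mu)} = \infty$) and is not smooth. I would therefore first truncate, then smooth. Set $H_m := H \mathbf{1}_{\{|H| \leq m\}}$ for $m \in \nat$. Since $\mu(W) < \infty$ and $|H_m| \leq m$, we have $H_m \in \Lp{2}(\mu)$, and the pointwise identity $H \cdot H_m = |H_m|^2$ gives
\begin{align*}
\int_W H \cdot \frac{H_m}{\|H_m\|_{\Lp{2}(\mu)}} \ud\mu = \|H_m\|_{\Lp{2}(\mu)}
\end{align*}
whenever $\|H_m\|_{\Lp{2}(\mu)} > 0$. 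By monotone convergence $\|H_m\|_{\Lp{2}(\mu)} \nearrow \|H\|_{\Lp{2}(\mu)}$, so it would suffice to show $S \geq \|H_m\|_{\Lp{2}(\mu)}$ for every $m$.

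To approximate $X_m := H_m/\|H_m\|_{\Lp{2}(\mu)}$ by smooth functions in a manner that simultaneously controls the pairing against $H$ (which itself need not lie in $\Lp{2}(\mu)$), I would work with the auxiliary Radon measure $\nu := (1 + |H|)\mu$ on $W$, which is finite since $H \in \Lp{1}(\mu)$ and $\mu(W) < \infty$. The bounded function $X_m$ lies in $\Lp{2}(\nu)$, and the standard density of $\cdspace{\infty}(W, \real^{\fulldim})$ in $\Lp{2}(\nu)$ for a finite Radon measure on an open subset of $\real^{\fulldim}$ (via simple-function approximation, inner/outer regularity, and mollification) yields $\tilde X_k \in \cdspace{\infty}(W, \real^{\fulldim})$ with $\|\tilde X_k - X_m\|_{\Lp{2}(\nu)} \to 0$ as $k \to \infty$. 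Cauchy--Schwarz applied to $\mu$ with weight $|H|$, combined with $|H| \leq 1 + |H|$, then gives
\begin{align*}
\Big| \int_W H \cdot (\tilde X_k - X_m) \ud\mu \Big| \leq \|H\|_{\Lp{1}(\mu)}^{1/2} \|\tilde X_k - X_m\|_{\Lp{2}(\nu)} \to 0,
\end{align*}
and in particular $\int_W H \cdot \tilde X_k \ud\mu \to \|H_m\|_{\Lp{2}(\mu)}$. Since also $\|\tilde X_k\|_{\Lp{2}(\mu)} \to \|X_m\|_{\Lp{2}(\mu)} = 1$, dividing $\tilde X_k$ by $\max\{1, \|\tilde X_k\|_{\Lp{2}(\mu)}\}$ delivers admissible smooth test functions whose pairing with $H$ is arbitrarily close to $\|H_m\|_{\Lp{2}(\mu)}$. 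Hence $S \geq \|H_m\|_{\Lp{2}(\mu)}$, and sending $m \to \infty$ finishes the proof.

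The main obstacle is the case $\|H\|_{\Lp{2}(\mu)} = \infty$: a plain $\Lp{2}(\mu)$-approximation of a smoothed maximizer does not control the pairing against $H$, because $H$ itself need not belong to $\Lp{2}(\mu)$. The introduction of the finite weighted measure $\nu = (1+|H|)\mu$ is the key device, since convergence in $\Lp{2}(\nu)$ captures both $\Lp{2}(\mu)$-closeness and a weighted $\Lp{1}$-control of the error, and thereby allows the two cases $\|H\|_{\Lp{2}(\mu)} < \infty$ and $\|H\|_{\Lp{2}(\mu)} = \infty$ to be handled uniformly.
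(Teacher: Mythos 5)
Your proof is correct, and it takes a genuinely different route from the paper's. Both proofs agree on the easy direction (Cauchy--Schwarz), and both truncate $H$ and then approximate the truncated maximizer by smooth vectorfields, but the way the approximation error is controlled against the unbounded $H$ is different. The paper keeps the unit direction $\nu = H/|H|$ and truncates only the magnitude, approximates $h_L\nu$ in $\Lp{2}(\mu)$ by an $X_\epsilon$ with the extra $\sup$-bound $|X_\epsilon|\leq 2L$, and then carries out a hand-crafted decomposition of $\|h_L\|_{\Lp{2}}^2$ into $\int H\cdot X_\epsilon$ plus error terms; the dangerous cross-term $X_\epsilon\cdot\nu\,(h-h_L)$ is tamed by isolating the bad set $A=\{X_\epsilon\cdot\nu<0\}\cap\{h>L\}$, bounding $\mu(A)$ by a Chebyshev-type estimate, and invoking absolute continuity of $\int_A|h|\,\ud\mu$. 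Your argument replaces all of this with a single structural device: the finite weighted measure $\nu=(1+|H|)\mu$, in which approximating in $\Lp{2}(\nu)$ simultaneously gives $\Lp{2}(\mu)$-closeness (for the normalization) and the weighted control needed to pass the pairing against $H$ through the limit, via the one-line bound $\bigl|\int H\cdot(\tilde X_k-X_m)\,\ud\mu\bigr|\leq\|H\|_{\Lp{1}(\mu)}^{1/2}\|\tilde X_k-X_m\|_{\Lp{2}(\nu)}$. This eliminates the $\sup$-bound on the approximant, the set $A$, and the Chebyshev step entirely, at the modest cost of appealing to density of $\cdspace{\infty}$ in $\Lp{2}$ of a finite Radon measure (a standard fact, and in any case the paper relies on an analogous approximation lemma from Ilmanen). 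The paper's argument is more self-contained and quantitative; yours is shorter, more conceptual, and handles the cases $\|H\|_{\Lp{2}(\mu)}<\infty$ and $=\infty$ uniformly by design.
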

\begin{proof}
We set $\Lp{p}:=\Lp{p}((W,\mu),R)$,
where $R$ is $\real$ or $\real^{\fulldim}$ which will be clear from the context.
The inequality "$\geq$" follows immediately from H\"olders inequality.
For "$\leq$" consider some $L\in\real^+$ and set
\begin{align*}
h(x):=|H(x)|,
\quad
h_L(x):=\min\{h(x),L\},
\quad
\nu(x):=
\begin{cases}
\frac{H(x)}{|H(x)|}
&\text{if }|H(x)|>0\\
0 &\text{else}  
\end{cases}
\end{align*}
for $x\in W$.
By the finite measure of $W$ we see $h_L\nu\in\Lp{2}$.
We may assume $\|h_L\|_{\Lp{2}}>0$ or else $h_L\equiv 0$
thus $H\equiv 0$ wich trivially establishes the estimate.
Let $\epsilon\in (0,1)$ with $\epsilon\leq\|h_L\|_{\Lp{2}}/2$.
Then there exists an $X_{\epsilon}\in\dspace{1}(W,\real^{\fulldim})$ with
\begin{align*}
\|X_{\epsilon}-h_L\nu\|_{\Lp{2}}\leq\epsilon^2,
\quad
\left|1-\frac{\|X_{\epsilon}\|_{\Lp{2}}}{\|h_L\|_{\Lp{2}}}\right|\leq\epsilon,
\quad
\|X_{\epsilon}\|_{\Lp{2}}\geq\frac{\|h_L\|_{\Lp{2}}}{2},
\quad
\sup|X_{\epsilon}|\leq 2L.
\end{align*}
Here the first estimate resembles an $\Lp{2}$-approximation,
see \cite[Lem.\ 7.4]{MR1196160}.
Then $\epsilon\leq\|h_L\|_{\Lp{2}}/2$ yields the second and third estimate.
The $\sup$-bound can be realized by cutting off $X_{\epsilon}$.

Now we have
\begin{align*}
\|h_L\|_{\Lp{2}}^2
%:=\int_U h_L^2\ud\indRad{V_0}
&=
\int_U\left(h X_{\epsilon}\cdot\nu
+ h_L(h_L-X_{\epsilon}\cdot\nu)
- X_{\epsilon}\cdot\nu(h-h_L)\right)\ud\indRad{V_0}.
\end{align*}
Set $A:=\{X_{\epsilon}\cdot\nu<0\}\cap\{h>L\}$.
Then by H\"older's inequality and $h=h_L$ on $\{h\leq L\}$ we can estimate
\begin{align*}
\|h_L\|_{\Lp{2}}^2
\leq &
\int_U H\cdot X_{\epsilon}\ud\mu
+\epsilon\|h_L\|_{\Lp{2}}
+2L\int_{A} |h-L|\,\ud\mu.
\end{align*}
Deviding by $\|X_{\epsilon}\|_{\Lp{2}}$
we arrive at
\begin{align}
\label{cuttet_estimate2}
\begin{split}
(1-\epsilon)\|h_L\|_{\Lp{2}}
\leq
S+2\epsilon
+\frac{4L}{\|h_L\|_{\Lp{2}}}\int_{A} |h|\,\ud\mu.
\end{split}
\end{align}
For the measure of $A$ estimate
\begin{align*}
\mu(A)
\leq\frac{1}{L^2}\int_A|L-X_{\epsilon}\cdot\nu|^2\ud\mu
=\frac{1}{L^2}\int_A|h_L-X_{\epsilon}\cdot\nu|^2\ud\mu
%\leq\frac{1}{L^2}\|X_{\epsilon}-h_L\nu\|_{\Lp{2}}^2
\leq\frac{\epsilon^2}{L^2}.
\end{align*}
As $h$ is in $\Lp{1}$ letting $\epsilon\searrow 0$ in \eqref{cuttet_estimate2}
implies $\|h_L\|_{\Lp{2}}\leq S$. (See \cite[\S 2.4.11]{MR0257325})
As $L\in\real^+$ was arbitrary and by definition of $h_L$
the monotone convergence theorem establishes the result.
\end{proof}

\end{appendix}

\bibliography{Lahiris_Bib}{}
\bibliographystyle{alpha}

\noindent
Ananda Lahiri\\
Max Planck Institute for Gravitational Physics\\
(Albert Einstein Institute)\\
Am M\"uhlenberg 1\\
D-14476 Potsdam-Golm, Germany

\end{document}